\newcommand{\df}{{\, \stackrel{\mathrm{def}}{=}\, }}
\numberwithin{equation}{section}
\def\XXint#1#2#3{{\setbox0=\hbox{$#1{#2#3}{\int}$ }
\vcenter{\hbox{$#2#3$ }}\kern-.6\wd0}}
\newcommand*{\rom}[1]{\expandafter\@slowromancap\romannumeral #1@}
\newtheorem{theorem}{Theorem}[section]
\newtheorem{remark}[theorem]{Remark}
\newtheorem{proposition}{Proposition}[section]
\newtheorem{question}[theorem]{Question}
\newtheorem{lemma}[theorem]{Lemma}
\newtheorem{definition}[theorem]{Definition}
\theoremstyle{definition}
\newcommand{\x}{\mathbf{x}}
\newcommand{\p}{{\mathbf{P}}}
\newcommand{\Z}{\mathbb{Z}}
\newcommand{\R}{\mathbb{R}}
\newcommand{\N}{\mathbb{N}}
\newcommand{\y}{\mathbf{y}}
\newcommand{\K}{\mathcal{K}}
\newcommand{\F}{\mathbb{F}}
\newcommand{\details}[1]{}
\newcommand{\beq} {\begin{equation}}
\newcommand{\eeq} {\end{equation}}
\DeclareMathOperator{\codim}{\text{codim}}
\begin{document}

\title{Simultaneous Khintchine theorem on manifolds in positive characteristics: convergence case }
\subjclass[2020]{Primary 11J13, 11J61, 11J83; Secondary 11J70,37A17,22E40}
\keywords{Diophantine approximation, Khintchine Theorem}

\author{Noy Soffer Aranov}
\address{\textbf{Noy Soffer Aranov} Graz University of Technology, Institute of Analysis and Number Theory, 8010 Graz, Austria}
\email{noy.sofferaranov@tugraz.at}
\author{Sourav Das}
\address{\textbf{Sourav Das} School of Mathematics, TIFR Mumbai}
\email{sourav@math.tifr.res.in, iamsouravdas1@gmail.com}
\author{Arijit Ganguly}
\address{\textbf{Arijit Ganguly} Department of Mathematics, IIT Kanpur}
\email{arijit.ganguly1@gmail.com, aganguly@iitk.ac.in}
\author{Aratrika Pandey}
\address{\textbf{Aratrika Pandey} Department of Mathematics, IIT Bombay, Mumbai, India 400076 \\}
\email{aratrika.pandey@gmail.com, 214090004@iitb.ac.in}
\thanks{}
\begin{abstract}
We prove the convergence case of Khintchine’s theorem, with general approximation functions $\psi$ that are not necessarily monotonic, for analytic nonplanar manifolds over local fields of positive characteristic. Our approach is based on the method of counting rational points near manifolds developed by Beresnevich and Yang \cite{BY}. To address the scenario where the given $\psi$ is not monotonic, we extend our function field by adjoining an appropriate root. Additionally, in the course of the proof, we establish several new results in the geometry of numbers over function fields, which are of independent interest.
\end{abstract}
\maketitle

\section{Introduction}
Diophantine approximation is a central topic in number theory, which deals with the effective density of rational numbers within real numbers and its higher-dimensional analogue. The Khintchine-Groshev theorem is a foundational result in metric Diophantine approximation. Let $\psi:(0,\infty) \longrightarrow (0,1)$ be a \emph{approximating function}, i.e., $\psi(x)\rightarrow 0$ as $x \rightarrow \infty$. For a fixed $\boldsymbol{\theta}\in \mathbb R^n$ with $n \in \mathbb N$, the following set has been an object of interest for a long time
$$
    \mathcal{S}_{n}^{\boldsymbol{\theta}}(\psi)\df\left\{\y\in  \mathbb R^n: \left\|\y-\frac{\mathbf{p}+\boldsymbol{\theta}}{q} \right\|<\frac{\psi(q)}{q}\,\,\text{for i.m. }(\mathbf{p},q) \in \mathbb Z^n \times \mathbb N \right\},
$$
where $\|\cdot\|$ denotes the sup norm and `i.m' reads as `infinitely many'. The points $\y$ lying in $\mathcal{S}_{n}^{\boldsymbol{\theta}}(\psi)$ are usually referred to as \emph{$(\psi,\boldsymbol{\theta})$-approximable}. 
The case $\boldsymbol{\theta} = \mathbf{0}$ is called \emph{the homogeneous setting}, and the points of $\mathcal{S}_{n}^{\mathbf{0}}(\psi)$ are referred \emph{to as $\psi$-approximable}. The form of approximation described above concerns how well one can approximate points of $\mathbb{R}^n$ by rational points, and it is commonly referred to as the \emph{simultaneous form} of approximation. 
In contrast, another widely studied type of approximation asks how close a point in $\mathbb{R}^n$ is to a rational hyperplane; this is known as the \emph{dual form} of approximation. To begin with, we recall the inhomogeneous version of classical Khintchine's theorem (see \cite{Khinchine,Groshev} and \cite[\S 1.1]{AllenRamirez}).

\begin{theorem}[The Inhomogeneous Khintchine theorem]
\label{thm:InhomKhint}
Given any approximating function $\psi$ and $\boldsymbol{\theta} \in \mathbb R^n$
\begin{equation}\label{inhomokhint}
\mathcal{S}_n^{\boldsymbol{\theta}}(\psi)=\begin{cases}
\text{Lebesgue null},  & \text{if } \displaystyle \sum_{q=1}^{\infty}\psi(q)^n<\infty;\\
\text{Lebesgue full},   & \text{if }\displaystyle \sum_{ q=1}^{\infty}\psi(q)^n=\infty\text{ and }\psi \text{ is monotonic}.
\end{cases}\end{equation} 
\end{theorem}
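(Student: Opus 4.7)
The plan is to split the proof into the convergence and divergence halves, which are quite different in character. By $\Z^n$-periodicity of the problem it suffices throughout to work inside the unit cube $[0,1]^n$.

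For the convergence direction I would proceed via a direct first Borel--Cantelli argument. Put
$$
A_q \df \bigcup_{\mathbf{p}\in\{0,1,\dots,q\}^n} B\!\left(\frac{\mathbf{p}+\boldsymbol{\theta}}{q},\frac{\psi(q)}{q}\right)\cap [0,1]^n,
$$
so that $\mathcal{S}_n^{\boldsymbol{\theta}}(\psi)\cap[0,1]^n = \limsup_{q} A_q$. A crude volume count bounds $|A_q|\le (q+1)^n \cdot (2\psi(q)/q)^n \lesssim \psi(q)^n$, and the hypothesis $\sum_q \psi(q)^n<\infty$ then immediately gives $|\limsup A_q|=0$ by the first Borel--Cantelli lemma. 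This half is essentially formal.

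For the divergence direction I would invoke the second Borel--Cantelli lemma in its Paley--Zygmund (quasi-independence) formulation. A matching lower bound $|A_q|\gtrsim \psi(q)^n$, valid for $q$ large enough that the balls around the points $(\mathbf{p}+\boldsymbol{\theta})/q$ are pairwise disjoint inside $[0,1]^n$, ensures that $\sum_q|A_q|$ diverges. It then suffices to establish the quasi-independence estimate
$$
\sum_{q,q'\le Q}|A_q\cap A_{q'}| \;\le\; C\Bigl(\sum_{q\le Q}|A_q|\Bigr)^2
$$
for arbitrarily large $Q$; combined with a standard $0/1$-law (of Cassels--Gallagher type) for $\limsup$ sets of balls centred at rationals, a positive lower measure will automatically upgrade to full measure. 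The heart of the matter is the pair estimate for $|A_q\cap A_{q'}|$ with $q\ne q'$, which reduces, after clearing denominators, to counting pairs $(\mathbf{p},\mathbf{p}')\in\Z^{2n}$ with $\|q'\mathbf{p}-q\mathbf{p}'+(q'-q)\boldsymbol{\theta}\|$ small; this is an inhomogeneous lattice-point count.

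The hard part will be precisely that inhomogeneous shift $\boldsymbol{\theta}$, which destroys the exact lattice structure exploited in Khintchine's original homogeneous proof. Two routes seem available. The cleanest is a transference argument: first prove the homogeneous case by the classical pair-correlation method, or dynamically via the Dani correspondence on $\mathrm{SL}_{n+1}(\Z)\backslash \mathrm{SL}_{n+1}(\R)$, and then deduce the inhomogeneous statement from Schmidt's inhomogeneous transference principle. The more direct route, closer in spirit to what the present paper will ultimately need in the positive-characteristic setting, is to carry out the pair-correlation count for arbitrary $\boldsymbol{\theta}$ by stratifying the sum over $(q,q')$ according to $\gcd(q,q')$ and $|q-q'|$ and invoking Fourier analysis on $\mathbb{T}^n$ to show that the shift $\boldsymbol{\theta}$ only contributes a lower-order error.
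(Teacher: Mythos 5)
The paper does not prove this statement: Theorem \ref{thm:InhomKhint} is quoted as classical background with citations to Khintchine, Groshev and Allen--Ram\'irez, so there is no in-paper argument to compare against, and your proposal has to stand on its own.

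Your convergence half is correct; it is the standard first Borel--Cantelli argument. The only bookkeeping point is that the range $\mathbf{p}\in\{0,\dots,q\}^n$ should be enlarged by a fixed amount depending on $\boldsymbol{\theta}$ so that every ball meeting $[0,1]^n$ is accounted for, but this changes nothing asymptotically.

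The divergence half, as written, has two real gaps. First, the quasi-independence estimate $\sum_{q,q'\le Q}|A_q\cap A_{q'}|\le C\bigl(\sum_{q\le Q}|A_q|\bigr)^2$ is the substance of the theorem, and you assert rather than establish it. After clearing denominators you do reach the inhomogeneous count $\|q'\mathbf{p}-q\mathbf{p}'+(q'-q)\boldsymbol{\theta}\|\lesssim q'\psi(q)+q\psi(q')$, and the right move is to note that for fixed $q\ne q'$ with $\gcd(q,q')=g$ the vectors $q'\mathbf{p}-q\mathbf{p}'$ fill out $g\Z^n$, so the solution count is translation-invariant up to $O(1)$ per lattice line; but this needs to be made quantitative, and you must separately treat the near-diagonal range where $q'\psi(q)+q\psi(q')$ fails to dominate the lattice spacing $g$. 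Saying ``Fourier analysis on $\mathbb{T}^n$'' is not a proof of that. Second, the zero-one law you invoke is misattributed: the Cassels and Gallagher zero-one laws are proved for homogeneous $\limsup$ sets centred at rational points and exploit invariance of $\Q^n$ under integer dilations; once the centres are shifted by $\boldsymbol{\theta}$ the $\limsup$ set loses that invariance, and those theorems do not apply. You either need an explicitly inhomogeneous zero-one law, or you must localise the pair-correlation estimate to arbitrary sub-balls and conclude full measure by a Lebesgue density argument in the style of Beresnevich--Velani. The transference alternative you mention is likewise too vague to count as a proof route: Schmidt's transference relates simultaneous and dual problems, not homogeneous and inhomogeneous Khintchine-type statements; what would actually be needed is the Beresnevich--Velani inhomogeneous transference principle, whose intersectivity hypotheses are themselves something to verify, not a free pass.
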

It is important to note that, while one does not need any sort of monotonicity assumption to prove the convergence part, the question of whether the emphasized monotonicity assumption can be dropped in the divergence case has attracted many mathematicians over the last six or seven decades at least. For $n=1$, this assumption is crucial, owing to the counterexample constructed by Duffin and Schaeffer \cite{DuffinSchaeffer}. In fact, it has been shown in \cite{Ram} that, when $n=1$, for no $\theta \in \mathbb{R}$ one can remove the monotonicity.  In a recent paper \cite{Yu}, Han Yu showed that the monotonicity assumption can be removed for all $n\geq 3$. In the case of $n=2$, Allen and Ram\'{i}rez were able to remove the same at the cost of an extra divergence of a measure sum \cite[Theorem 3]{AllenRamirez}. \\

The situation becomes more delicate when examining the extent to which embedded submanifolds of 
$\R^n$ inherit the Diophantine properties prevalent in $\R^n$. Establishing an analogue of Khintchine's theorem for manifolds is closely linked to the challenging problem of counting rational points near the manifold (for example, see \cite{Ber,AnnalsBeres,BVVZ,SST,beresnevich2025rational}). For analytic non-degenerate manifolds of dimension 
$d\geq2$, the divergence case was established by Beresnevich \cite{AnnalsBeres}, and later extended to non-analytic curves in \cite{BVVZ}, both employing the ubiquity framework alongside Kleinbock–Margulis's quantitative non-divergence estimates \cite{KM1998}. For non-degenerate manifolds in $\R^n$, the convergence case was resolved by Beresnevich and Yang \cite{BY} (see also \cite{SST}), and Khintchine’s theorem in full generality for simultaneous approximation was recently settled by Beresnevich and Datta \cite{beresnevich2025rational}. More recently, Datta \cite{Dattaquant} extended the result in Beresnevich and Yang by proving a quantitative simultaneous Khintchine theorem over manifolds in $\R^n$. We ask the reader to note that all have assumed the approximation function is non-increasing, even for the convergence case.\\

On a related note, we remind the reader that, unlike the simultaneous form, the dual form of Diophantine approximation is well understood. Beginning with Kleinbock and Margulis’s resolution of the Baker-Sprind\v{z}uk conjecture \cite{KM1998}. A complete dual Khintchine theorem for non-degenerate manifolds was later established in both homogeneous \cite{BKM, BBKM} and inhomogeneous settings \cite{BBV}.\\

In recent years, there has been growing interest in Diophantine approximation over local fields of positive characteristic. Diophantine approximation over function fields concerns the quantitative study of approximating Laurent series by rational functions and their higher-dimensional analogues. In this context, Mahler developed the geometry of numbers in the seminal paper \cite{Mahler}, which provides a straightforward route to proving the analogue of Dirichlet's theorem. The Khintchine theorem in positive characteristic was proved by de Mathan \cite{deMathan}, later extended by Kristensen to systems of linear forms and to the analysis of Hausdorff dimensions of exceptional sets \cite{Kr1}. Further advances include a multiplicative Khintchine–Groshev theorem \cite{AGP} and extensions to imaginary quadratic function fields \cite{GR}. More recently, Chao Ma and Wei-Yi Su established inhomogeneous versions of the Khintchine and Jarník–Besicovitch theorems \cite{Chao-Su}, and Kristensen derived asymptotic formulas for inhomogeneous linear systems, yielding inhomogeneous forms of the Khintchine–Groshev and Jarník theorems \cite{Kr2}. For further developments in the inhomogeneous theory, see \cite{KN, Fuchs}.\\

In the metric theory over manifolds in this setting, the analogues of the Baker-Sprind\v{z}uk conjecture had been proved by Ghosh \cite{Ghosh2007}. Subsequently, Ganguly and Ghosh proved the inhomogeneous Sprind\v{z}uk conjecture in \cite{GGcontemp}.
Recently, Das and Ganguly proved a complete inhomogeneous Khintchine-Groshev type theorem for nonplanar manifolds over function fields in the dual setting \cite{das2022inhomogeneous}. For additional results and recent developments on Diophantine approximation in function fields, see \cite{GG1,GJNT,DaG2, Bang, AranovKim, DattaXu, KimLinPaulin}.\\

In this paper, we prove the convergence case of Khintchine’s theorem for analytic non-planar manifolds over local fields of positive characteristic without the monotonicity assumption. While this certainly extends the earlier works of Beresnevich and Yang \cite{BY}, and Beresnevich and Datta \cite{beresnevich2025rational} to the setting of function fields, we believe that the fact that our result applies to general approximation functions that need not be monotonic makes it distinguishable from its real counterpart.  Although our approach is based on the technique of Beresnevich and Yang \cite{BY} for counting rational points near manifolds, the positive characteristic setup and the absence of the monotonicity necessitate proper adaptation, which we explain in \S\ref{subsec:method}. On our way, we also prove some new results in the geometry of numbers over function fields (see \S\ref{sec:geometry of numbers} and \S\ref{sec:minima}), which are of independent interest. We begin by introducing the function field framework.
\subsection{The Function Field Setting}\label{sec:main_result}
We begin with the field of rational functions $\frac{P}{Q} \in \mathbb{F}_q(T)$, where $q = p^b$ for some prime $p$ and $b \in \mathbb N$. We define a non-archimedean absolute value on $\mathbb{F}_q(T)$ as follows.  For any rational function $\frac{P}{Q} \in \mathbb{F}_q(T)$, where $P, Q \in \mathbb{F}_q[T]$ and $Q \neq 0$, we set
\[
\left|\frac{P}{Q}\right| = 
\begin{cases}
q^{\deg(P) - \deg(Q)}, & \text{if } P \neq 0, \\
0, & \text{if } P = 0.
\end{cases}
\]
The completion of $\mathbb{F}_q(T)$ with respect to this absolute value is $\mathbb{F}_q((T^{-1}))$, i.e., the field of Laurent series in $T^{-1}$
over the finite field $\mathbb F_q.$ The absolute value on $\mathbb{F}_q((T^{-1}))$, which we again denote by $|\cdot|$, is defined as follows.  For $a \in \mathbb{F}_q((T^{-1}))$, if $a=0$ then $|a| = 0$. Otherwise, $a$ can be expressed uniquely as a Laurent series,
\[
a = \sum_{k \leq k_0} a_k T^k,
\]
where $k_0 \in \mathbb{Z}$, each $a_k \in \mathbb{F}_q$, and the leading coefficient $a_{k_0} \neq 0$. We define the degree of $a$ by $\deg(a) \df k_0$, and set $|a| \df q^{\deg(a)}$. This clearly extends the absolute value $|\cdot|$ on $\mathbb{F}_q(T)$ to $\mathbb{F}_q((T^{-1}))$, and the extension remains non-archimedean and discrete. Consequently, $\mathbb{F}_q((T^{-1}))$ is a complete and separable metric space that is ultrametric and, hence, totally disconnected. It is worth noting that any local field of positive characteristic is isomorphic to some 
 $\mathbb{F}_q((T^{-1}))$ (\cite[Part I, Chapter I, Theorem 8]{Weil}).\\

Throughout the paper, we let $\mathcal{R}$ and $\K$ denote, respectively, the polynomial ring $\mathbb F_q[T]$ and the field of Laurent series $\mathbb{F}_q((T^{-1})).$
We denote by $\mathcal{O}_{\K}$ the ring of integers of $\K$, defined by $\mathcal{O}_{\K}\df\{x\in \K:|x|\leq 1\}$. Let $\mathcal{L}_1$ denote the Haar measure on $\K$, normalized so that $\mathcal{L}_1\left(\mathcal{O}_{\K}\right)=1$. For any $n \in \N$, we equip $\K^n$ with the sup norm, which is defined as follows: 
\[
\|\x\| = \max_{1 \leq i \leq n} |x_i|,\quad \text{for}\,\, \x=(x_1,\dots,x_n) \in \K^n, 
\]
and we equip it with the product Haar measure, denoted by $\mathcal{L}_n$.\\

The field $\K = \mathbb{F}_q((T^{-1}))$ is not algebraically closed. For every integer $\ell \ge 2$, the equation $
x^\ell = T$ has no solution in $\K$. Consequently, the algebraic closure of $\K$ is an infinite algebraic extension of $\K$. (see \cite{Ked} for more information about the algebraic closure of $\K$). Nevertheless, one can discuss finite extensions, such as the extension of the function field $\K$ obtained by adjoining the $\ell$-th root of $T$, and we denote this extension by $\K_{\ell}$. Let the polynomial ring in $\K_{\ell}$ be denoted by $\widetilde{\mathcal{R}}$, i.e.,   $\widetilde{\mathcal{R}}\df \F_q[T^{1/\ell}]\subseteqq \K_\ell$. So we have the natural inclusion $\mathcal{R} \subseteqq \widetilde{\mathcal{R}} \subseteqq \K_{\ell}.$
It is worth noting that $\widetilde{\mathcal{R}}$ is a lattice in $\K_{\ell}$, while $\mathcal{R}$ is only a discrete subgroup when viewed as a subset of $\K_{\ell}$. We also equip $\K_{\ell}^n$ with an analogous sup norm. \\   

Given an \emph{approximating function} $\psi:\{q^r: r\in \mathbb{Z}_{\geq 0}\}\longrightarrow \{q^r: r\in \mathbb{Z}\}$  and a vector $\boldsymbol{\Theta} \in \K^n$, we define the set of all \emph{$(\psi, \boldsymbol{\Theta})$-approximable vectors} in $\K^n$ by
\[
\mathcal{S}_n^{\boldsymbol{\Theta}}(\psi) \df \left\{ \x \in \K^n : \left\| \x - \frac{\mathbf{P} + \boldsymbol{\Theta}}{Q} \right\| < \frac{\psi(|Q|)}{|Q|} \quad \text{for infinitely many } (\mathbf{P}, Q) \in \mathcal{R}^n \times (\mathcal{R} \setminus \{0\}) \right\}.
\]
In the special case when $\boldsymbol{\Theta} = \mathbf{0}$, the set reduces to the set of \emph{$\psi$-approximable vectors}, which we denote by $\mathcal{S}_n({\psi}) \df \mathcal{S}_n^{\mathbf{0}}(\psi).$\\

Without loss of generality, we consider the manifold $\mathscr{M}$ defined by the map $\mathbf{f}: U\subseteqq \K^d \to \K^n$, where
\[
\mathbf{f} \df (x_1, \ldots, x_d, f_1(\x), \ldots, f_m(\x)) = (\x, \bm{f}(\x)),
\]
with $d = \dim(\mathscr{M})$, $m= \codim(\mathscr{M})$, $n=d+m$, and $U\subseteqq \K^d$ is an open set. We also assume that the defining map $\mathbf{f}$ is analytic (see \cref{sec:function} for the definition).
\subsection{Main Results}
Throughout the paper, we make the following assumptions for the sake of simplicity:

\begin{enumerate}[label=($\mathfrak{A}$.{\arabic*})]
\item \label{I}$\overline{U}$ is compact.
       \item\label{II} The map $\bm{f}=(f_1,\dots,f_m): U \subseteqq \K^d \rightarrow \K^m$ admits an analytic extension to an open neighborhood of $\overline{U}$. 
       
    \item\label{III} The restrictions of the functions $1,f_1,\dots,f_m$ to any open subset of $U$ are linearly independent over $\K.$

    \item\label{IV} $M>0$ is such that 
    \begin{enumerate}[label=(\roman*)]
        \item  $\|\bm{f}(\x)\| \leq M$ and $\|\nabla \bm{f}(\x)\| \leq M$, for any $\x \in U$; and 
        \item \label{second-diff-quo} $|{\bar\Phi_{\beta}}\bm{f}(\y_1,\y_2,\y_3)|\leq M$ for any second difference quotient $\Phi_{\beta}$ (see \cref{sec:function} for the definition) and $\y_1,\y_2,\y_3 \in U$. 
    \end{enumerate}
    \end{enumerate}
\vspace{0.1 cm}
We now state the main result of this article.
\begin{theorem}
\label{thm:main}
Let $n \geq 2$, let $\boldsymbol{\Theta}\in \K^n$, and let $\mathscr{M} \subseteqq \K^n$ be an analytic submanifold satisfying \ref{I}-\ref{IV}. Suppose that the approximating function $\psi:\{q^r: r\in \mathbb{Z}_{\geq 0}\}\longrightarrow \{q^r: r\in \mathbb{Z}\}$ satisfies
\begin{equation}\label{conv-sum}
    \sum_{\tiny {Q \in \mathbb{F}_q[T] \setminus \{0\}}} \psi(|Q|)^n < \infty.
\end{equation}
Then almost every point of $\mathscr{M}$ is not $(\psi,\boldsymbol{\Theta})$-approximable, i.e.,
$\mathcal{L}_d\left(\mathbf{f}^{-1}(\mathcal{S}_n^{\boldsymbol{\Theta}}(\psi))\right)=0$. 
\end{theorem}
\begin{remark}$ \ $
    \begin{enumerate}[label=($\mathfrak{R}$.\arabic*)]
        \item\label{rem-conv sum} Given any $\varphi:\{q^r: r\in \mathbb{Z}_{\geq 0}\}\longrightarrow \{q^r: r\in \mathbb{Z}\}$, it is easy to see that 
\begin{equation}\label{conv-sum alt}
    \sum_{\tiny {Q \in \mathbb{F}_q[T] \setminus \{0\}}} \varphi (|Q|)< \infty \Longleftrightarrow \sum_{t = 0}^{\infty} q^t \varphi(q^t) < \infty.
\end{equation}
In particular, \eqref{conv-sum} is equivalent to 
$\sum_{t = 0}^{\infty} q^t \psi(q^t)^n < \infty$.
Indeed, given any $t\geq 0$, an easy counting argument yields that there are exactly $(q-1)q^t$ polynomials of degree $t$. Therefore, 
\[
\sum_{Q \in \mathbb{F}_q[T],~|Q|\leq q^N} \varphi(|Q|)
=(q-1) \sum_{t=0}^N  q^t \varphi(q^t), \text{ for any } N \in \N. 
\] Note that the real analogue of this equivalence requires that $\psi$ is non-increasing (see \cite[Lemma 1.4]{BY}), whereas here this is not necessary due to \eqref{conv-sum alt}.
\item Fix $\kappa \in \left(1, \frac{3}{2}\right)$. It suffices to prove Theorem \ref{thm:main} for approximating functions satisfying the following:
\begin{equation}\label{lower bound approx} 
    \psi\left(|Q|\right)\geq \frac{1}{|Q|^{\frac{\kappa}{n}}},~\forall Q\in \mathbb{F}_q[T]\setminus \{0\}.
\end{equation}For, if $\psi$ does not satisfy \eqref{lower bound approx}, one considers the function
\[\tilde{\psi}(|Q|)\df\max \left\{\psi(|Q|), \frac{1}{|Q|^{\frac{\kappa}{n}}} \right\},~~\forall Q\in \mathbb{F}_q[T]\setminus \{0\}.\] It is then easy to verify that 
\begin{align*}
    \sum_{Q} \tilde{\psi}(|Q|)^n \leq  \sum_{Q } \psi(|Q|)^n+  \sum_{Q}\frac{1}{|Q|^{\kappa}}\asymp     \sum_{t = 0}^{\infty} q^t \psi(q^t)^n + \sum_{t=0}^{\infty}q^t\cdot \frac{1}{q^{\kappa t}}<\infty.
\end{align*} Since $\tilde{\psi}$ clearly satisfies \eqref{conv-sum} and $(\psi,\boldsymbol{\Theta})$-approximability implies $(\tilde{\psi},\boldsymbol{\Theta})$-approximability, one concludes $\mathcal{L}_d\left(\mathbf{f}^{-1}(\mathcal{S}_n^{\boldsymbol{\Theta}}(\psi))\right)=0$ at once from  $\mathcal{L}_d\left(\mathbf{f}^{-1}(\mathcal{S}_n^{\boldsymbol{\Theta}}(\tilde{\psi}))\right)=0$. In view of this, the use of the assumption mentioned above in \eqref{lower bound approx} in the proof of Theorem \ref{thm:main} is completely justified.  
    \end{enumerate}
\end{remark}
 A simple argument further extends Theorem \ref{thm:main} to  Hausdorff measures. Recall that, for $ \sigma>0$, the Hausdorff $\sigma$-measure of a set $A \subseteqq \K^{n}$ is defined as
\begin{equation*}
    \mathcal{H}^{\sigma}(A)\df\lim_{\rho \to 0^{+}} \inf \left\{ \sum_{i=1}^\infty \operatorname{diam}(A_{i})^{\sigma} : A \subseteqq \bigcup_{i=1}^\infty  A_{i}\text{ and } \operatorname{diam}(A_{i})<\rho,~\forall i \right\},
\end{equation*}
where $\operatorname{diam}(A)=\sup\{\Vert \mathbf{u}-\mathbf{v}\Vert:\mathbf{u},\mathbf{v}\in A\}$ denotes the diameter of a set. The following provides an extension of Theorem~\ref{thm:main} to Hausdorff measures. 
\begin{theorem}
\label{thm:DimConv}
    Let $d+m=n\geq 2$, $\boldsymbol{\Theta}\in \K^n$, and $\psi:\{q^r: r\in \mathbb{Z}_{\geq 0}\}\longrightarrow \{q^r: r\in \mathbb{Z}\}$ be an approximating function. Suppose that $\mathscr{M} \subseteqq \K^n$ is an analytic submanifold satisfying \ref{I}-\ref{IV}. Assume that $\sigma>0$ and $\alpha>0$ are such that the following pair of series is convergent: 
    \begin{equation}\label{eq:hausdim}
      \sum_{Q\in \F_q[T]\setminus \{0\}}\left(\frac{\psi(|Q|)}{|Q|}\right)^{\sigma+m}|Q|^{n} \text{ and }  \sum_{Q\in \F_q[T]\setminus\{0\}}\left(\frac{\psi(|Q|)}{|Q|}\right)^{\frac{\sigma-d}{2}}\left(\psi(|Q|)^{-\frac{n\alpha}{n+1}}|Q|^{-\frac{3\alpha}{2(n+1)}-1}\right).
    \end{equation}
    Then, $\mathcal{H}^\sigma\left(\mathbf{f}^{-1}(\mathcal{S}_n^{\boldsymbol{\Theta}}(\psi))\right)=0$.
\end{theorem}
 We prove both the above theorems in \S \ref{sec: proof of main theorem}. Similar to \cite{BY}, the proof strategy is to divide the manifold into two parts: we count the number of rational points near the manifold in the \emph{generic part} $U \setminus \mathfrak{M}(s,t)$ and show that the measure of the \emph{special part} $\mathfrak{M}(s,t)$ of the manifold is relatively small. We refer the reader to \S \ref{subsec:Generic} for the definition of $\mathfrak{M}(s,t)$. \\

Given integers $t,s > 0$ and $\Delta \subseteqq \K^n$, we define the following set, which counts the number of rational points $\frac{\p}{Q}$ that are within a distance $q^{-(s+t)}$ of $\mathbf{f}(\Delta \cap U)$:
\begin{align*}
\mathcal{R}_{\boldsymbol{\Theta}}(\Delta,s,t) \df& \left\{(\mathbf{P},Q) \in \mathcal{R}^{n}\times (\mathcal{R} \setminus \{0\}) : |Q|=q^t\text{ and } \inf_{ \x \in \Delta \cap U} \left\|\mathbf{f}(\x) - \frac{\p + \boldsymbol{\Theta}}{Q}\right\| < q^{-(s+t)} \right\},
\end{align*}
and  $\mathcal{N}_{\boldsymbol{\Theta}}(\Delta; s,t) \df \# \mathcal{R}_{\boldsymbol{\Theta}}(\Delta,s,t)$. We prove the following proposition.

\begin{proposition}
\label{prop:Key}
Let $U \subseteqq \K^d$ be an open set, and let $\mathbf{f} : U \to \K^n$ be a map satisfying \ref{I}-\ref{IV}, and $1<\kappa <\frac{3}{2} $.  Then there exists a family $\left\{\mathfrak{M}(s,t):s,t\in \N\right\}$ of subsets of $ U$ satisfying the following: 
\begin{enumerate}[label=($\mathfrak{S}$.\arabic*)]
    \item $\mathfrak{M}(s,t)$ can be expressed as the union of disjoint balls in $U$ with radius $q^{-\frac{s+t}{2}}$; 
    \item\label{special part} for every $\x_0\in U$, there exists a ball $B_0$ centered at $\x_0$ and  positive constants $C$ and $\alpha$, depending on $B_0$ and $\mathbf{f}$, such that 
\begin{align*}
    \mathcal{L}_d(\mathfrak{M}(s,t) \cap B_0) \leq  Cq^{\frac{\alpha}{n+1}
    \left(ns-\frac{3t}{2}\right)} \mathcal{L}_d(B_0), \text{ whenever }t\gg 1 \text{ and } s\leq\frac{\kappa t}{n}; \text{ and }
\end{align*}
\item for every ball $B \subseteqq U$, the following holds for all sufficiently large $t$:  $$\forall s\in \N,~\mathcal{N}_{\boldsymbol{\Theta}}(B \setminus \mathfrak{M}(s,t), s, t) \ll_{n,\mathbf{f}} q^{(d+1)t-ms} \mathcal{L}_d(B),$$ 
where $A\ll_{n,\mathbf{f}} B$ means that there exists a constant $c>0$ depending on $n,\mathbf{f}$ such that $A\leq cB.$
\end{enumerate}

\end{proposition}

\subsection{Counting Heuristics}
\label{subsec:Counting}
We now motivate the heuristic count leading to Proposition \ref{prop:Key}.
Let $Q \in \mathcal R$ with $\deg Q = t$. Then
\[
\#\{Q \in \mathcal R : \deg Q = t\}
= (q-1)q^t
\asymp q^{t+1}.
\]

For a fixed denominator $Q$ and a fixed $i\in\{1,\dots,n\}$, the number of
polynomials $P_i\in\mathcal R$ with $\deg P_i \le t$ is $q^{t+1}.$
Therefore,
\[
\#\{\mathbf P=(P_1,\dots,P_n)\in\mathcal R^n : \deg P_i \le t\}
= q^{n(t+1)}.
\]

Hence, for denominators of degree exactly $t$, the total number of rational
points of the form
$
\frac{\mathbf P}{Q}\in \mathbb F_q(T)^n$
is
\[
(q-1)q^t \cdot q^{n(t+1)}
= (q-1) q^{t + n(t+1)}
= (q-1) q^{(n+1)t + n}
\asymp q^{(n+1)t}.
\]

Outside the exceptional set $\mathfrak M(s,t)$ from
Proposition~\ref{prop:Key}, the number of rational points
$\frac{\mathbf p}{Q}$ lying within a distance $q^{-(s+t)}$
of $\mathbf f(\Delta \cap U)$ is heuristically bounded by
\[
(q^{-(s+t)})^m \, q^{(n+1)t}
= q^{-ms}q^{(d+1)t}.
\]
  
\subsection{Constraints and Differences in the Positive Characteristic Setting}\label{subsec:method}

While our approach is similar to that of \cite{BY} and \cite{beresnevich2025rational}, the function field setting poses new challenges that require a fresh perspective, given the non-monotonicity of the approximating function $\psi$. In the real case, one typically divides \([1,\infty)\) into the intervals \(\left[e^{t-1},e^t\right]\), where \(t\in \mathbb{N}\), so that every \(\psi\)-approximable vector \(\mathbf{y}\in \mathbb{R}^n\) satisfies \[\left\|\mathbf{y}-\frac{\mathbf{p}}{q}\right\|_{\infty}<\frac{\psi(e^{t-1})}{e^{t-1}}, \text{ with } \mathbf{p}\in \mathbb{Z}^n, q\in \mathbb{N}, \text{ and } e^{t-1}\leq q<e^t,\] for infinitely many \(t \in \mathbb{N}\). This reduction, which paves the way for the application of the Borel-Cantelli lemma, requires some form of monotonicity. In our scenario, the lack of any monotonicity assumption in the function field setting forces us to perform heightwise counting, i.e., we count polynomials \(Q\) with $|Q| = q^t$ (see Lemma~\ref{lem:DiagImLattPointCnt} for details). To this end, it is crucial that the main term of the upper bound for rational point counting coincides with the heuristic bound mentioned in \S \ref{subsec:Counting}. This is really difficult to avoid overcounting if the entries of diagonal flow remain in \(\mathcal{K}\), particularly with integral powers of \(T\). Moreover, another potential issue with working with integral powers is that sensible counting can only be performed for \(t\)s that belong to some proper infinite subset of \(\mathbb{N}\). This limitation results in a smaller \(\limsup\) set than what is required. \\

To circumvent this, we therefore work over the extended field  $\K\left(T^{\frac{1}{\ell}}\right)$. This, however, introduces additional challenges; for instance, the polynomial ring $\mathcal{R}$ is no longer a lattice in the extension $\K\left(T^{\frac{1}{\ell}}\right)$, but merely a discrete subgroup.  As a result, the classical tools of the geometry of numbers are not directly applicable. To overcome this obstacle, we establish results in the geometry of numbers for specific discrete subgroups that are not necessarily lattices (see \cref{sec:geometry of numbers}). Moreover, in the appendix \cref{sec:minima}, we establish a result concerning the first successive minima of such discrete subgroups over $\K\left(T^{\frac{1}{\ell}}\right)$, which we believe to be of independent interest. \\

\subsection{Structure of the Paper}
  We begin in \cref{sec:function} by recalling the definitions of analytic and $C^k$
 functions in the ultrametric setting. In \cref{sec:geometry of numbers}, we establish several auxiliary results in the geometry of numbers over function fields. Proposition~\ref{prop:Key} is proved in \S\ref{sec:analysisonparts} by deriving a measure estimate for the special part of the manifold (see \cref{subsec:Special}) and a counting estimate for the generic part (see \cref{subsec:Generic}). Finally, Proposition~\ref{prop:Key} serves as the key ingredient in the proofs of Theorems~\ref{thm:main} and~\ref{thm:DimConv}, which are presented in \cref{sec: proof of main theorem}. We conclude the paper with an additional result on the geometry of numbers over function fields, proved in \S\ref{sec:minima}.
\subsection*{Acknowledgements} 
The authors express their sincere gratitude to the University of York for hosting the workshop ``Diophantine Approximation and Related Fields,'' during which part of this work was completed. The authors thank Anish Ghosh for his continuous encouragement. The authors would like to thank Shreyasi Datta and Subhajit Jana for many helpful discussions and insightful comments. The fourth-named author is supported by the Prime Minister’s Research Fellowship (ID: 1302639) during the course of the work.

\section{Ultrametric Calculus}\label{sec:function}

In this section, we recall the concept of ultrametric $C^k$ functions from \cite[Chapter 2]{Schikhof}. Let $U$ be an open subset of $\K$, and $ g: U  \longrightarrow \K $. The \textit{first-order difference quotient} of $ g $, denoted by $ \Phi^1 g $, is defined as
\[
\Phi^1 g(x, y) \df \frac{g(x) - g(y)}{x - y}, ~\forall (x,y) \in \nabla^2 U\df \{ (x, y) \in U \times U \mid x \neq y \}.
\]
We say that $ g $ is $C^1 $ at a point $ a \in U $ if 
$
\lim_{(x,y) \to (a,a)} \Phi^1 g(x, y)
$
exists, and $g$ is called $C^1$ on $U$ if it is $C^1$ at every point of $ U $. Note that \(g\in C^1(U)\) if and only if \(\Phi^1g\) can be uniquely extended to a continuous function  \(\bar\Phi^1 g \) on \(U\times U\). The concept of a \( C^k \) function can be defined inductively. Indeed, given any \(k\in \mathbb{N}\), 
let 
\(
\nabla^k U \df \{ (x_1, \dots, x_k) \in U^k \mid x_i \neq x_j \text{ for } i \neq j \}
\). 
Define the $k$-th order difference quotient $ \Phi^k g :  \nabla^{k+1} U \longrightarrow \K$ inductively:
\[
\Phi^0 g \df g,\quad
\Phi^k g(x_1, \dots, x_{k+1}) \df \frac{\Phi^{k-1} g(x_1, x_3, \dots, x_{k+1}) - \Phi^{k-1} g(x_2, x_3, \dots, x_{k+1})}{x_1 - x_2}.
\]
One can easily see that $ \Phi^k g$ is a symmetric function of its $k+1$ variables. The function $ g $ is said to be $C^k$ at a point $a \in U$ if the following limit exists:
\[ \lim_{(x_1, \dots, x_{k+1}) \to (a, \dots, a)} \Phi^k g(x_1, \dots, x_{k+1}),\]
and $g$ is said to be $C^k$ on $U$, which is denoted by $g \in C^k(U)$, if $g$ is $C^k$ at every point of $U$. In view of \cite[Theorem 29.9]{Schikhof}, this is equivalent to $\Phi^k g$ admitting a continuous extension $\bar\Phi^k g$ to $U^{k+1}$, which must be unique  as \(\nabla^{k} U\) is dense in \(U^k\). It follows that if $g \in C^k(U)$, all derivatives up to order $k$ exist and are continuous. In fact, one has 
\[
g^{(k)}(\x) = k! \, \bar{\Phi}^k g(x, \dots, x),~\forall x\in U.
\]


Now let $g:U_1 \times \dots \times U_d \subseteqq \K^d \longrightarrow \K$, where $U_i \subseteqq \K$ are open subsets for $i=1,\dots,d$. Denote by $ \Phi_i^k g $  the $k$-th order difference quotient with respect to the $ i $-th variable. For a multi-index $ \beta = (i_1, \dots, i_d) $, we define the following: 
\[
\Phi_\beta g \df \Phi_1^{i_1} \circ \cdots \circ \Phi_d^{i_d} g.
\]
The domain of $\Phi_\beta g $ is $\nabla^{i_1+1} U_1 \times \cdots \times \nabla^{i_d+1} U_d.$ We say $g \in C^k(U_1 \times \cdots \times U_d) $ if for all multi-indices $ \beta=(i_1,\dots,i_d) $ with $ |\beta| \df \sum_{j=1}^d i_j \le k $, the difference quotient $ \Phi_\beta g $ extends continuously to a function \(
\bar{\Phi}_{\beta} g\) on \(U_1^{i_1+1} \times \cdots \times U_d^{i_d+1}\).
As in the one-variable case, for $g \in C^k(U_1 \times \dots \times U_d)$, the mixed partial derivatives
\(
\partial_\beta g \df \partial_1^{i_1} \circ \cdots \circ \partial_d^{i_d} g
\)
exist and are continuous for all $|\beta| \le k $. Indeed, it is clear that
\begin{equation} \label{beta}
\partial_\beta g(x_1, \dots, x_d) = \beta! \, \bar{\Phi}_\beta g(x_1, \dots, x_1, \dots, x_d, \dots, x_d),
\end{equation}
where each variable $x_j$ appears $ i_j + 1 $ times and $\beta! \df \prod_{j=1}^d i_j! $. 

\begin{definition} \label{definition of analytic}Let $U$ be an open subset of $\K^d$. A function $g: U\subseteqq \K^d \to \K$  is called \emph{analytic} on $U$ if for every point $ \mathbf{x}_0 \in U $, there exists an open ball $B(\mathbf{x}_0, r) \subseteqq U$
and a power series \(\sum_{\beta \in \mathbb N_{0}^d} a_{\beta} (\mathbf x - \mathbf{x}_0)^{\beta}\), with coefficients $a_{\beta} \in \K $ and $\mathbb N_0=\mathbb N \cup \{0\}$, such that
\[
g(\mathbf{x}) = \sum_{\beta \in \mathbb N_{0}^d} a_{\beta} (\mathbf x - \mathbf{x}_0)^{\beta}, \text{ for all }
 \x \in B(\x_0,r),
\]
 where, given any multi-index $\beta=(i_1,\dots,i_d)\in \mathbb N_{0}^d$ and $\mathbf y= (y_1,\dots,y_d) \in \K^d$, \(\mathbf y^{\beta} \df y_1^{i_1}\dots y_d^{i_d}\). 
In other words, $ g $ is analytic on $U$ if it is locally given by a convergent power series about every point in $U$.
\end{definition}
\noindent Note that any analytic function is of class $C^{\infty}$. Given an analytic map $\mathbf{g}:U \subseteqq \K^d\longrightarrow \K^n$ with coordinates \(g_1,\dots, g_n\), by $\nabla \mathbf{g}(\x)$ we denote the $d \times n$ matrix whose $(i,j)$-th entry is $\partial_jg_i(\x)$.  
We also recall the following second-order Taylor's formula for analytic functions (\cite[\S11-Appendix]{DBG}). 
\begin{lemma} \label{lem:Taylor} Let $U$ be an open subset of $\K^d$, and  $\x\in U$ and $\mathbf{h}\in \K^d$ are such that \(\x+\mathbf{h}\in U\). Then for any analytic $g:U\longrightarrow \K$, 
\begin{equation}\label{Taylor formula-second}
    g(\x + \mathbf{h}) = g(\x) + \sum_{i=1}^d h_i \bar\Phi_{e_i} g(\cdot)+ \sum_{\beta = (i_1, \ldots, i_d), |\beta| = 2} \bar\Phi_{\beta}g(\cdot) \prod_{j=1}^d (h_j)^{i_j},
\end{equation}
where $e_i$ is the multi-index whose $i$-th coordinate is $1$ and all other coordinates are zero, and the arguments of $\bar\Phi_{e_i} g(\cdot)$ and $\bar\Phi_{\beta}g(\cdot)$ are some of the components of $\x$ and $\mathbf{h}$. 
\end{lemma}
We also recall Besicovitch's covering theorem in the function field setup: \begin{theorem}\cite[Example 2.1]{KleinbockTomanov} \label{thm:Covering}Let $A$ be a bounded subset of $\K^n$. Then any covering by balls of $A$ has a countable subcover consisting of mutually disjoint balls.  \end{theorem}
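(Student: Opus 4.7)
The plan is to exploit the ultrametric geometry of $\K^d$, which collapses the usual Besicovitch argument into a clean selection procedure. The structural fact I would use is the well-known dichotomy for non-archimedean balls: any two balls in $\K^d$ are either disjoint or nested, and two intersecting balls of equal radius must coincide — both statements follow from the strong triangle inequality together with the fact that every point of a non-archimedean ball is a centre.

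Given a cover $\{B_\alpha\}_{\alpha\in I}$ of the bounded set $A$, I would first fix $x_0\in A$ and $R>0$ with $A\subseteq B(x_0,R)$. If some $B_\alpha$ meeting $A$ has radius at least $R$, the dichotomy forces $A\subseteq B_\alpha$, and a single ball suffices. Otherwise, for each $a\in A$ I would define
\[
r(a):=\sup\{\,r_\alpha : a\in B_\alpha\,\},
\]
which is bounded above by $R$. The next step is to argue that this supremum is attained: since the value group of $\K$ is the discrete set $q^{\mathbb{Z}}$, the distinct balls of the cover containing $a$ form a chain under inclusion (by the dichotomy), indexed by a subset of $\mathbb{Z}$; consequently the supremum is realised by some ball $B(a)$ in the cover.

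The family $\mathcal{F}:=\{B(a):a\in A\}$ is then the candidate disjoint subcover. For disjointness, if $B(a)\cap B(b)\neq\emptyset$ the dichotomy gives, say, $B(a)\subseteq B(b)$; but $a\in B(b)$ forces the radius of $B(b)$ to be at most $r(a)$, while the inclusion forces the reverse inequality, so $B(a)=B(b)$. For countability I would invoke separability of $\K^d$ (the countable set $\mathcal{R}^d$ is dense) together with the standard fact that any pairwise disjoint family of nonempty open sets in a separable metric space is at most countable. The only conceptually delicate point — and hence the main (if mild) obstacle — is justifying that $r(a)$ is attained; this is exactly where the discreteness of the value group of $\K$ enters, and it is precisely what makes the non-archimedean conclusion (an honest disjoint subcover, not merely bounded multiplicity) stronger than its Euclidean counterpart.
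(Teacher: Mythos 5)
Your argument is correct and is the standard ultrametric covering argument that the cited reference (Kleinbock--Tomanov, Example~2.1) appeals to; the paper itself does not supply a proof. The ball dichotomy together with the discreteness of the value group $q^{\mathbb{Z}}$ are exactly the right ingredients, and the ``take the maximal ball through each point'' selection gives an honest disjoint subcover, not merely one of bounded overlap.

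Two small points to tighten. First, you invoke separability of $\K^d$ via the claim that $\mathcal{R}^d=\mathbb{F}_q[T]^d$ is dense, but this is false: $\mathbb{F}_q[T]$ is a \emph{discrete} (cocompact) subgroup of $\K=\mathbb{F}_q((T^{-1}))$, playing the role of $\mathbb{Z}\subset\mathbb{R}$ --- for instance $T^{-1}$ lies at distance at least $q^{-1}$ from every polynomial. What is countable and dense is the field of rational functions $\mathbb{F}_q(T)$ (of which $\K$ is the completion), or equivalently the ring of Laurent polynomials $\mathbb{F}_q[T,T^{-1}]$; so $\K^d$ is indeed separable and your countability step survives once the dense set is corrected. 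Second, to make the attainment of $r(a)$ airtight you should note explicitly that every ball in $\K^d$ coincides, as a set, with a ball whose radius lies in $q^{\mathbb{Z}}$ (because all nonzero distances lie in $q^{\mathbb{Z}}$); after this normalization the distinct balls containing $a$ really do form a chain indexed by a bounded-above subset of $\mathbb{Z}$, which therefore has a maximum.
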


\section{Preliminaries in Geometry of Numbers}\label{sec:geometry of numbers}
Recall that for $\ell \in \N$, $\K_{\ell} = \K(T^{1/\ell})$ and $\mathcal{R} = \F_q[T]$. Note that $\K_{\ell} = \K$ when $\ell = 1$. 
For $i = 1, \dots, n+1$ and $g \in \operatorname{SL}(n+1, \K_{\ell})$, we define the $i$-th successive minimum of the discrete subgroup $g \mathcal{R}^{n+1} \subseteqq \K_{\ell}^{n+1}$ as
\[
\lambda_i(g\mathcal{R}^{n+1}) \df \inf \left\{ \lambda > 0 : B(0, \lambda) \cap g\mathcal{R}^{n+1} 
\text{ contains } i \text{ linearly independent vectors over } \K_{\ell} \right\},
\]
where $B(0, \lambda)$ denotes the ball of radius $\lambda$ centered at the origin in $\K_{\ell}^{n+1}$. \\

We define $G \df \operatorname{SL}(n+1, \K)$ and $\Gamma \df \operatorname{SL}(n+1, \mathcal{R})$. 
Then the associated homogeneous space $\mathcal{X}_{n+1} \df G / \Gamma$ can be identified with the space of all unimodular lattices in $\K^{n+1}$. Now we recall the function field analog of Minkowski's second theorem from \cite[Equations (24) and (25)]{Mahler}.

\begin{theorem}
    \label{thm:Mink2nd}
    Let $g\in \operatorname{GL}(n+1,\K)$. Then, 
    \begin{equation}
        \prod_{i=1}^{n+1}\lambda_i(g\mathcal{R}^{n+1})=\vert \det(g)\vert.
    \end{equation}
\end{theorem}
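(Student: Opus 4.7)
The plan is to prove the equality by producing an $\mathcal{R}$-basis of the rank-$(n+1)$ lattice $\Lambda := g\mathcal{R}^{n+1} \subset \mathcal{K}^{n+1}$ whose members have norms equal to the successive minima $\lambda_1, \ldots, \lambda_{n+1}$, and then to extract $|\det g|$ from this basis by exploiting the ultrametric orthogonality that such a basis automatically satisfies.

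\textbf{Step 1 (Adapted basis).} I construct vectors $v_1, \ldots, v_{n+1} \in \Lambda$ inductively, requiring that each $v_k$ attains $\lambda_k(\Lambda)$ and that $v_1, \ldots, v_k$ form an $\mathcal{R}$-basis of the primitive sublattice $\Lambda \cap \operatorname{span}_{\mathcal{K}}(v_1, \ldots, v_k)$. The existence of a norm-minimizer at each stage is guaranteed by the discreteness of the value group $q^{\mathbb{Z}}$. Given $v_1, \ldots, v_k$ with the above properties, pick $w \in \Lambda$ linearly independent from them and of minimum possible norm $\lambda_{k+1}$; using that $\mathcal{R}$ is a PID, I choose $r_1, \ldots, r_k \in \mathcal{R}$ so that $v_{k+1} := w - \sum_i r_i v_i$ generates, together with the earlier vectors, a primitive rank-$(k+1)$ sublattice, and the ultrametric inequality ensures $\|v_{k+1}\| \leq \max(\|w\|, \max_i |r_i|\|v_i\|) = \lambda_{k+1}$; the reverse inequality is forced by linear independence.

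\textbf{Step 2 (Orthogonality and determinant).} I show that for all $a_1, \ldots, a_{n+1} \in \mathcal{K}$,
\[
\Bigl\|\sum_{i=1}^{n+1} a_i v_i\Bigr\| = \max_{1 \leq i \leq n+1} |a_i|\,\|v_i\|.
\]
The bound $\leq$ is the ultrametric inequality. For $\geq$, a strict inequality, combined with the largest index $i_0$ attaining $\max_i |a_i|\|v_i\|$, would — after clearing denominators with an element of $\mathcal{R}$ — produce a vector in $\Lambda$ that is $\mathcal{K}$-linearly independent from $v_1, \ldots, v_{i_0 - 1}$ but of norm strictly smaller than $\lambda_{i_0}$, contradicting the definition of the $i_0$-th successive minimum. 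Now let $V \in \operatorname{GL}(n+1, \mathcal{K})$ be the matrix with columns $v_1, \ldots, v_{n+1}$. Since $\{v_i\}$ is an $\mathcal{R}$-basis of $g\mathcal{R}^{n+1}$, there is $\gamma \in \operatorname{GL}(n+1, \mathcal{R})$ with $V = g\gamma$; as $\det \gamma \in \mathbb{F}_q^\times$, we obtain $|\det V| = |\det g|$. The Leibniz expansion and ultrametric triangle inequality give $|\det V| \leq \prod_i \|v_i\|$. For the reverse inequality, Step 2 allows me to triangularize $V$ by a row permutation followed by right-multiplication by a suitable element of $\operatorname{GL}(n+1, \mathcal{O}_{\mathcal{K}})$, after which the $i$-th diagonal entry has absolute value exactly $\|v_i\|$; thus $|\det V| = \prod_i \|v_i\| = \prod_i \lambda_i(\Lambda)$.

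\textbf{Main obstacle.} The crux is Step 1, constructing a basis that simultaneously attains every successive minimum and generates primitive sublattices at each stage. Unlike the real setting, where Minkowski's second theorem comes with multiplicative gaps between $1$ and $(n+1)!$, the discreteness of $|\cdot|$ together with the PID structure of $\mathcal{R}$ conspire to give an \emph{exact} equality; the delicate point is verifying that the reduction $w \mapsto v_{k+1}$ can be executed while preserving both the norm $\lambda_{k+1}$ and the primitivity of the enlarged sublattice. Once this is in place, the orthogonality and determinant identity follow mechanically from the ultrametric structure.
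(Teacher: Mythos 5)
The paper does not actually prove this theorem: it is imported verbatim from Mahler's 1941 paper on the geometry of numbers in fields of series (equations (24)--(25) of \cite{Mahler}), so there is no ``paper's own proof'' to compare against. Your outline follows the standard route to Mahler's theorem — construct a reduced lattice basis attaining the successive minima, establish ultrametric orthogonality, and read off the determinant — so the architecture is right. But Step 1 has a genuine gap.

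The problem is the reduction $w\mapsto v_{k+1}:=w-\sum_i r_i v_i$. For any $r_1,\dots,r_k\in\mathcal{R}$ the $\mathcal{R}$-module generated by $v_1,\dots,v_k,w-\sum r_i v_i$ is \emph{identically equal} to the one generated by $v_1,\dots,v_k,w$: subtracting $\mathcal{R}$-combinations of the earlier basis vectors does not change the sublattice at all, hence cannot ``repair'' primitivity. So either $v_1,\dots,v_k,w$ is already a basis of $\Lambda\cap\operatorname{span}_\mathcal{K}(v_1,\dots,v_k,w)$ (in which case $r_i=0$ suffices), or it is not, in which case no choice of $r_i$ helps. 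Your argument ``choose $r_i$ so that $v_{k+1}$ generates a primitive sublattice'' is therefore vacuous, and the displayed chain $\|v_{k+1}\|\leq\max(\|w\|,\max_i |r_i|\|v_i\|)=\lambda_{k+1}$ is asserting exactly the nontrivial content without providing a mechanism.

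What \emph{is} true — and what makes the theorem work — is that the minimum-norm vector $w$ outside $\operatorname{span}_\mathcal{K}(v_1,\dots,v_k)$ automatically yields a primitive sublattice, but the proof of this requires the orthogonality property of $v_1,\dots,v_k$ that you only establish in Step 2. Sketch: if $u$ generates the rank-one quotient $(\Lambda\cap W)/\langle v_1,\dots,v_k\rangle$ and $\bar w=a\bar u$ with $|a|>1$, project $u$ onto the orthogonal complement of $\operatorname{span}_\mathcal{K}(v_1,\dots,v_k)$; the coefficients of the projection can be truncated to their polynomial parts in $\mathcal{R}$ (the fractional parts have absolute value $\leq q^{-1}$), producing an $\mathcal{R}$-translate of $u$ of norm $\leq\lambda_{k+1}/q$, which contradicts $\|u\|\geq\lambda_{k+1}$. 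This calculation uses both the discreteness of the value group and the orthogonality of $v_1,\dots,v_k$. So the induction must carry orthogonality as a hypothesis rather than prove it afterwards; Steps 1 and 2 need to be interleaved, not sequenced. (Step 2 as stated also needs a bit more care: after clearing denominators the bound you get is $<|r||a_{i_0}|\|v_{i_0}\|$, not directly $<\lambda_{i_0}$, and the contradiction with the definition of the $i_0$-th minimum must be extracted at the smallest index $j_0$ with $\lambda_{j_0}=\lambda_{i_0}$, where the previous minima are strictly smaller, hence $\leq\lambda_{j_0}/q$. These are fixable, but you should spell them out.)
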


Given a lattice $\Lambda\in \mathcal{X}_{n+1}$, we now define the \emph{dual lattice} of $\Lambda$ as follows
$$\Lambda^*\df\left\{\y\in \K^{n+1}:\forall \x\in \Lambda, \langle \x,\y\rangle\in \mathcal{R}\right\},$$
where $\langle \x,\y\rangle=\sum_{i=1}^{n+1}x_iy_i$.
We first state the duality theorem on 
$\K^{n+1}$, originally due to Mahler \cite[Equation (28)]{Mahler} (see also \cite[Lemma 5.4]{BagshawKerr}).
\begin{theorem}
\label{thm:MahDual}
Let $\Lambda$ be a lattice in $\K^{n+1}$, and $\Lambda^*$ be its dual lattice. Then we have
\begin{equation}\label{eqn:3.1}
\lambda_i(\Lambda^*)\lambda_{n+2-i}(\Lambda)=1,\ \text{for}\ 1\leq i \leq n+1. 
\end{equation}
\end{theorem}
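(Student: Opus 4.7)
The plan is to establish the identity $\lambda_i(\Lambda^*)\lambda_{n+2-i}(\Lambda)=1$ for each $i$ by a two-sided squeeze. First I would prove the \emph{lower bound} $\lambda_i(\Lambda^*)\lambda_{n+2-i}(\Lambda) \geq 1$ via a pigeonhole argument on pairings; then I would derive the matching \emph{upper bound} by combining all $n+1$ of these inequalities with the function-field Minkowski's second theorem (\cref{thm:Mink2nd}) applied to both $\Lambda$ and $\Lambda^*$.

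For the lower bound, fix $i \in \{1,\dots,n+1\}$ and pick $\mathbf{u}_1,\dots,\mathbf{u}_i \in \Lambda^*$ linearly independent with $\|\mathbf{u}_j\| \leq \lambda_i(\Lambda^*)$, together with $\mathbf{v}_1,\dots,\mathbf{v}_{n+2-i} \in \Lambda$ linearly independent with $\|\mathbf{v}_k\| \leq \lambda_{n+2-i}(\Lambda)$. The $\K$-span $V$ of $\{\mathbf{u}_1,\dots,\mathbf{u}_i\}$ has dimension $i$, so its orthogonal complement $V^{\perp}$ under $\langle\cdot,\cdot\rangle$ has dimension $n+1-i$. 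Since $n+2-i > n+1-i$, at least one $\mathbf{v}_k$ cannot lie in $V^{\perp}$, so there exist indices $j,k$ with $\langle \mathbf{u}_j, \mathbf{v}_k\rangle \neq 0$. By the defining property of $\Lambda^*$, $\langle \mathbf{u}_j, \mathbf{v}_k\rangle \in \mathcal{R} \setminus \{0\}$, and every nonzero element of $\mathcal{R} = \F_q[T]$ has absolute value at least $1$. Combining this with the ultrametric inequality $|\langle \mathbf{u}_j, \mathbf{v}_k\rangle| \leq \|\mathbf{u}_j\|\,\|\mathbf{v}_k\|$ (immediate from the strong triangle inequality) yields
\[
1 \;\leq\; |\langle \mathbf{u}_j, \mathbf{v}_k\rangle| \;\leq\; \|\mathbf{u}_j\|\,\|\mathbf{v}_k\| \;\leq\; \lambda_i(\Lambda^*)\,\lambda_{n+2-i}(\Lambda).
\]

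For the upper bound, I would first record the covolume identity $|\det(\Lambda)|\cdot|\det(\Lambda^*)| = 1$: writing $\Lambda = g\mathcal{R}^{n+1}$ for some $g \in \operatorname{GL}(n+1,\K)$, one checks from the defining condition and the self-duality of $\mathcal{R}^{n+1}$ under the standard pairing that $\Lambda^* = (g^T)^{-1}\mathcal{R}^{n+1}$, whence $|\det(\Lambda)|\cdot|\det(\Lambda^*)| = |\det(g)|\cdot|\det(g)|^{-1} = 1$. Applying \cref{thm:Mink2nd} to each of $\Lambda$ and $\Lambda^*$ then gives
\[
\prod_{i=1}^{n+1} \lambda_i(\Lambda)\,\lambda_{n+2-i}(\Lambda^*) \;=\; |\det(\Lambda)|\cdot|\det(\Lambda^*)| \;=\; 1.
\]
Using $\Lambda^{**}=\Lambda$, the lower-bound argument applied with the roles of $\Lambda$ and $\Lambda^*$ swapped gives $\lambda_i(\Lambda)\lambda_{n+2-i}(\Lambda^*) \geq 1$ for every $i$. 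Since the product of these positive factors equals $1$, each must equal $1$, which is exactly the claim.

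The main obstacle is bookkeeping rather than depth: one has to verify carefully the two algebraic duality identities $\Lambda^{**} = \Lambda$ and $\Lambda^* = (g^T)^{-1}\mathcal{R}^{n+1}$ in the function-field setting, and confirm that $\mathcal{R}^{n+1}$ is self-dual under $\langle\cdot,\cdot\rangle$ with values in $\mathcal{R}$. Notably, the proof is strictly sharper than its archimedean counterpart: in $\R$ one picks up a dimensional constant in the duality estimate, whereas here the ultrametric inequality gives $|\langle\mathbf{u},\mathbf{v}\rangle| \leq \|\mathbf{u}\|\|\mathbf{v}\|$ with no loss, and nonzero elements of $\mathcal{R}$ satisfy $|\cdot| \geq 1$ sharply, so both sides of the squeeze are tight and produce the exact equality in \eqref{eqn:3.1}.
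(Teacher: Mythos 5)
Your proof is correct. Note, however, that the paper does not supply a proof of Theorem~\ref{thm:MahDual} at all: it is quoted as a known result of Mahler, citing \cite[Equation~(28)]{Mahler} and \cite[Lemma~5.4]{BagshawKerr}. So there is no paper-internal argument to compare against; what you have produced is a self-contained substitute for that citation, using only ingredients already present in \cref{sec:geometry of numbers}.

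The two halves of your squeeze check out. For the lower bound, the pigeonhole on the pairing is sound: if $\mathbf{u}_1,\dots,\mathbf{u}_i\in\Lambda^*$ are linearly independent of norm at most $\lambda_i(\Lambda^*)$ and $\mathbf{v}_1,\dots,\mathbf{v}_{n+2-i}\in\Lambda$ are linearly independent of norm at most $\lambda_{n+2-i}(\Lambda)$, dimension-counting with the nondegenerate form (the paper's $\sum_{i=1}^n x_iy_i$ is evidently a typo for $\sum_{i=1}^{n+1}x_iy_i$) forces some $\langle\mathbf{u}_j,\mathbf{v}_k\rangle\neq 0$, and then $\langle\mathbf{u}_j,\mathbf{v}_k\rangle\in\mathcal{R}\setminus\{0\}$ together with the ultrametric bound $|\langle\mathbf{u},\mathbf{v}\rangle|\leq\|\mathbf{u}\|\,\|\mathbf{v}\|$ gives $\lambda_i(\Lambda^*)\lambda_{n+2-i}(\Lambda)\geq 1$. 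One small point you glossed over but which is genuinely needed: the successive minima, defined as infima, are \emph{attained} here because the value set of the norm is the discrete set $q^{\mathbb Z}\cup\{0\}$ and $\Lambda$ is discrete, so representatives $\mathbf{u}_j,\mathbf{v}_k$ as above exist; this is automatic in the non-archimedean setting but worth saying. For the upper bound, the identity $\Lambda^*=(g^T)^{-1}\mathcal{R}^{n+1}$ for $\Lambda=g\mathcal{R}^{n+1}$ follows from self-duality of $\mathcal{R}^{n+1}$ under the standard pairing, so $|\det\Lambda|\,|\det\Lambda^*|=1$, and \cref{thm:Mink2nd} applied to $\Lambda$ and $\Lambda^*$ gives $\prod_i\lambda_i(\Lambda)\lambda_{n+2-i}(\Lambda^*)=1$. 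Since every factor is at least $1$ (either by $\Lambda^{**}=\Lambda$ and symmetry, or more simply by re-indexing $i\mapsto n+2-i$ in the inequality you already proved — you don't actually need the biduality identity), every factor equals $1$. Your closing remark that the non-archimedean setting gives exact equality with no dimensional constants, unlike the real case, is accurate and explains why Mahler's function-field version is cleaner than its archimedean counterpart.
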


For integers $a_1, \dots, a_{n+1}$, consider the matrix 
\begin{equation}\label{matrix-def}
    g=\operatorname{diag}\left(T^{\frac{a_1}{\ell}},\dots,T^{\frac{a_{n+1}}{\ell}}\right),
\end{equation}  and $g^*$ is defined as $g^{*}\df(g^{t})^{-1}$.  We prove the following generalization of Theorem \ref{thm:MahDual} for some specific discrete subgroups in $\K_{\ell}^{n+1}$.
\begin{theorem}[Duality]
\label{thm:Duality}
{Let $g\in \operatorname{GL}(n+1,\K_{\ell})$ be of the form \eqref{matrix-def}, and let $\Lambda$ be a lattice in $\K^{n+1}$. Then 
    
  \begin{equation}\label{duality ineq}
    q^{-\frac{\ell-1}{\ell}}  \leq \lambda_i(g^*\Lambda^*)\lambda_{n+2-i}(g\Lambda)\leq q^{\frac{\ell-1}{\ell}},~\forall i=1, \dots,n+1.
  \end{equation}}
\end{theorem}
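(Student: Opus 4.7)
My plan is to reduce to the classical Mahler duality (Theorem~\ref{thm:MahDual}) by approximating the fractional-power diagonal matrix $g$ by an integer-power one. For each $i$, set $b_i := \lfloor a_i/\ell\rfloor$ and $g' := \operatorname{diag}(T^{b_1},\dots,T^{b_{n+1}})$; then $g'\in\operatorname{GL}(n+1,\K)$, so $g'\Lambda$ is a genuine $\K$-lattice in $\K^{n+1}$ with dual $g'^{*}\Lambda^{*}$, and Theorem~\ref{thm:MahDual} supplies the exact identity $\lambda_i(g'^{*}\Lambda^{*})\,\lambda_{n+2-i}(g'\Lambda) = 1$ for every $i$.

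The central step will be a two-sided comparison between the successive minima of $g\Lambda$ and those of $g'\Lambda$, and likewise for their duals. The correction matrix $gg'^{-1}$ is diagonal with entries $T^{a_i/\ell - b_i}$, and by the choice of floor its diagonal absolute values lie in $[1,\,q^{(\ell-1)/\ell}]$. Because the sup norm is coordinatewise, every $v\in \K_\ell^{n+1}$ satisfies $\|v\|\le\|gg'^{-1}v\|\le q^{(\ell-1)/\ell}\|v\|$, while the inverse $g'g^{-1}$ satisfies the opposite bound. Since $gg'^{-1}$ is a $\K_\ell$-linear isomorphism between $g'\Lambda$ and $g\Lambda$, it preserves $\K_\ell$-linear independence, and so these uniform norm bounds transfer directly to the successive minima:
\[
\lambda_i(g'\Lambda)\le \lambda_i(g\Lambda)\le q^{(\ell-1)/\ell}\,\lambda_i(g'\Lambda).
\]
The parallel calculation for the duals, using $g^{*}(g'^{*})^{-1}=g^{-1}g'=g'g^{-1}$ (whose diagonal entries have absolute values in $[q^{-(\ell-1)/\ell},1]$ in this case), yields
\[
\lambda_i(g^{*}\Lambda^{*})\le \lambda_i(g'^{*}\Lambda^{*})\le q^{(\ell-1)/\ell}\,\lambda_i(g^{*}\Lambda^{*}),
\]
with the direction of the inequalities flipped because negating the exponents reverses which of the two discrete subgroups is norm-inflated.

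Combining both comparisons with Mahler's identity for $g'\Lambda$ will immediately pin the product $\lambda_i(g^{*}\Lambda^{*})\lambda_{n+2-i}(g\Lambda)$ between $q^{-(\ell-1)/\ell}$ and $q^{(\ell-1)/\ell}$: the upper bound uses $\lambda_i(g^*\Lambda^*)\le\lambda_i(g'^*\Lambda^*)$ together with $\lambda_{n+2-i}(g\Lambda)\le q^{(\ell-1)/\ell}\lambda_{n+2-i}(g'\Lambda)$, while the lower bound uses the reverse pair. I do not expect any genuinely hard step; the only subtle point to verify is that the $\K_\ell$-successive minima of the $\K$-lattice $g'\Lambda$ agree with the classical quantities to which Theorem~\ref{thm:MahDual} applies. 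This is immediate once one observes that $1,T^{1/\ell},\dots,T^{(\ell-1)/\ell}$ form a $\K$-basis of $\K_\ell$, so any $\K_\ell$-linear dependence among vectors of $\K^{n+1}$ splits into $\K$-linear dependences by comparing coefficients of $T^{j/\ell}$.
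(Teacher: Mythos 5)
Your proof is correct and takes essentially the same route as the paper's: the paper's WLOG step (replacing $\Lambda$ by $\operatorname{diag}(T^{\lfloor a_1/\ell\rfloor},\dots,T^{\lfloor a_{n+1}/\ell\rfloor})\Lambda$ so that the remaining exponents lie in $\{0,\dots,\ell-1\}$) is exactly your explicit factorization $g=(gg'^{-1})g'$, and both arguments then combine the sup-norm distortion bound $\|v\|\le\|gg'^{-1}v\|\le q^{(\ell-1)/\ell}\|v\|$ (and its inverse) with the observation that $\K$-linear independence of vectors in $\K^{n+1}$ is equivalent to $\K_\ell$-linear independence, before invoking Mahler's duality $\lambda_i(\Lambda^*)\lambda_{n+2-i}(\Lambda)=1$ on the honest $\K$-lattice. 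The only cosmetic difference is that you keep $g'$ explicit rather than folding it into $\Lambda$.
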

 
\begin{proof}
Without any loss of generality, we may assume that $a_i\in \left\{0,1,\dots,\ell-1\right\}$; for if necessary, we may replace $\Lambda$ with the lattice $\operatorname{diag}\left(T^{\left\lfloor\frac{a_1}{\ell}\right\rfloor},\dots, T^{\left\lfloor{\frac{a_{n+1}}{\ell}}\right\rfloor}\right)\Lambda$, where $\lfloor c\rfloor\df\max \{k \in \mathbb Z: k \leq c\}$ for $c\in \mathbb R$. 
  
Since $a_i\in\{0,1,\dots,\ell-1\}$, it follows that

   \begin{align}
       \|\mathbf{v}\|=\|g^{-1}g\mathbf{v}\|\leq \|g^{-1}\|\|g\mathbf{v}\|\leq \|g\mathbf{v}\|\leq \|g\|\|\mathbf{v}\|\leq q^{\frac{\ell-1}{\ell}}\|\mathbf{v}\|;\text{ and }\nonumber \\
        q^{-\frac{\ell-1}{\ell}}\|\mathbf{v}\|\leq \|g\|^{-1}\|\mathbf{v}\|= \|g\|^{-1}\|gg^{-1}\mathbf{v}\|\leq \|g^{-1}\mathbf{v}\|\leq \|\mathbf{v}\|, \forall \mathbf{v}\in\K^{n+1},\label{second ineq}
   \end{align} where, for any diagonal matrix \(D\df  \operatorname{diag}(\beta_1, \dots ,\beta_{n+1})\in \operatorname{Mat}_{n+1}(\K_{\ell}) \), \(\|D\|\) is defined by \(\max_{1\leq i\leq n+1}|a_i|\). Since $\K_{\ell}$ is a finite extension over $\K$, $\mathbf{v}_1, \dots, \mathbf{v}_{n+1}\in \Lambda$  are linearly independent over $\K$ if and only if $\mathbf{v}_1, \dots, \mathbf{v}_{n+1}$ are linearly independent over $\K_{\ell}$. Hence, 
\begin{equation}\label{linearly indep}
    \begin{split}
    \mathbf{v}_1, \dots, \mathbf{v}_{n+1} \text{ are linearly independent}
    \Longleftrightarrow g\mathbf{v}_1, \dots, g\mathbf{v}_{n+1} \text{ are linearly independent.}\\ \Longleftrightarrow g^{-1}\mathbf{v}_1, \dots, g^{-1}\mathbf{v}_{n+1} \text{ are linearly independent.}
\end{split}
\end{equation}
Consider linearly independent vectors $\mathbf{v}_1, \dots, \mathbf{v}_{n+1}$ in $\Lambda$ such that $\|\mathbf{v}_i\|=\lambda_i(\Lambda)=\lambda_{n+2-i}^{-1}\left(\Lambda^{\ast}\right)$, for all $i=1, \dots, n+1$.  Then from \eqref{linearly indep} and \eqref{second ineq}, it follows that 
   \begin{equation}\label{succ:1}
       \lambda_i(g\Lambda)\leq  q^{\frac{\ell-1}{\ell}}\lambda_i(\Lambda),~\forall i=1, \dots, n+1. 
   \end{equation} Similarly, one obtains that 
   \begin{align}
      \lambda_i(\Lambda)\leq   \lambda_i(g\Lambda), \label{succ:2}\\
  q^{-\frac{\ell-1}{\ell}}\lambda_i(\Lambda)\leq \lambda_i(g^{-1}\Lambda), \label{succ:3} \text{ and }\\
  \lambda_i(g^{-1}\Lambda)\leq \lambda_i(\Lambda),~\forall i=1, \dots,n+1.\label{succ:4}
   \end{align}
   Since $\Lambda$ is arbitrary, by replacing $\Lambda$ with $\Lambda^*$, from \eqref{succ:3} and \eqref{succ:4}, we also have 
   \begin{equation}\label{comp:1}
     q^{-\frac{\ell-1}{\ell}}\lambda_i(\Lambda^{\ast})\leq \lambda_i(g^{\ast}\Lambda^{\ast})\leq \lambda_i(\Lambda^{\ast}),~\forall i=1, \dots,n+1,
   \end{equation} since $g$ is diagonal, so that $g^{-1}$ is the same as its transpose. Multiplying inequalities in 
   \eqref{succ:1} with $i$ replaced by $n+2-i$ and \eqref{comp:1} now yields that 
   \[\lambda_i(g^{\ast}\Lambda^{\ast})\lambda_{n+2-i}(g\Lambda)\leq q^{\frac{\ell-1}{\ell}}\lambda_i(\Lambda^{\ast})\lambda_{n+2-i}(\Lambda),~\forall i=1, \dots, n+1. 
   \]
   On the other hand, using \eqref{succ:2} and \eqref{comp:1}, we obtain,
\[q^{-\frac{\ell-1}{\ell}}\lambda_i(\Lambda^{\ast})\lambda_{n+2-i}(\Lambda)\leq \lambda_i(g^{\ast}\Lambda^{\ast})\lambda_{n+2-i}(g\Lambda).\]
Combining these, we have,
\[q^{-\frac{\ell-1}{\ell}}\lambda_i(\Lambda^{\ast})\lambda_{n+2-i}(\Lambda)\leq \lambda_i(g^{\ast}\Lambda^{\ast})\lambda_{n+2-i}(g\Lambda)\leq q^{\frac{\ell-1}{\ell}}\lambda_i(\Lambda^{\ast})\lambda_{n+2-i}(\Lambda),~\forall i=1, \dots, n+1.\]
   Equation \eqref{duality ineq} is immediate from this, due to \eqref{eqn:3.1}.
   \end{proof} 
   \begin{remark}
   {When $\ell=1$, \eqref{duality ineq} reduces to \eqref{eqn:3.1}.}
\end{remark}

Recall from \cite{Mahler} that convex bodies in $\K^{n+1}$ are precisely the sets of the form $g\mathcal{O}_{\K}^{n+1}$, where $g\in \operatorname{GL}(n+1,\K_{\ell})$. Then, clearly $\operatorname{Vol}(g \mathcal{O}_{\K}^{n+1})=\vert \det(g)\vert$. We now state the following lemma, due to Bagshaw and Kerr \cite{BagshawKerr}, which counts the number of lattice points in a convex body:

\begin{lemma}{\cite[Lemma 6.2]{BagshawKerr}}
\label{lem:BKCount}
   Let $u\in \operatorname{GL}(n+1,\K)$, let $\Lambda=u{\mathcal{R}}^{n+1}$, and let $\mathcal{C}=h\mathcal{O}_{\K}^{n+1}$ be a convex body. Then, 
   $$\#(\Lambda\cap \mathcal{C})=\prod_{i=1}^{n+1}\left\lceil\frac{q}{\lambda_i(h^{-1}\Lambda)}\right\rceil,$$
   where $\lceil\cdot\rceil:\mathbb{R}\rightarrow \mathbb{Z}$ is defined by $\lceil \alpha\rceil=\min\{k\in \Z:k\geq \alpha\}$. In particular, if $\mathcal{C}$ contains a fundamental domain for $\Lambda$, then, 
   $$\#(\Lambda\cap \mathcal{C})=q^{n+1}\frac{\operatorname{Vol}(\mathcal{C})}{\vert \det(u)\vert}.$$
\end{lemma}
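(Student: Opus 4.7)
The plan is to translate the problem to the unit ball and then count via a non-archimedean orthogonal basis. First, since $h \in \operatorname{GL}(n+1, \mathcal{K})$ is bijective, the map $\mathbf{x} \mapsto h^{-1}\mathbf{x}$ gives a bijection between $\Lambda \cap \mathcal{C}$ and $\Lambda' \cap \mathcal{O}_{\mathcal{K}}^{n+1}$ where $\Lambda' := h^{-1}\Lambda$. Writing $\mu_i := \lambda_i(\Lambda')$, the problem reduces to establishing
\[
\#(\Lambda' \cap \mathcal{O}_{\mathcal{K}}^{n+1}) = \prod_{i=1}^{n+1} \left\lceil \frac{q}{\mu_i} \right\rceil.
\]

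The core step is to produce a non-archimedean orthogonal $\mathcal{R}$-basis $\mathbf{v}_1, \ldots, \mathbf{v}_{n+1}$ of $\Lambda'$ realizing the successive minima, in the sense that $\|\mathbf{v}_i\| = \mu_i$ and
\[
\left\| \sum_{i=1}^{n+1} c_i \mathbf{v}_i \right\| = \max_{1 \leq i \leq n+1} |c_i|\, \mu_i \qquad \text{for all } c_i \in \mathcal{K}.
\]
Such a basis exists by the classical non-archimedean geometry of numbers, as in Mahler \cite{Mahler}. With this in hand, a lattice point $\sum c_i \mathbf{v}_i$ with $c_i \in \mathcal{R}$ belongs to $\mathcal{O}_{\mathcal{K}}^{n+1}$ if and only if $|c_i| \leq 1/\mu_i$ for every $i$. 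Since $\mu_i$ is a power of $q$, an easy case analysis gives that the number of such $c_i \in \mathcal{R}$ equals $\lceil q/\mu_i \rceil$: when $\mu_i \leq 1$ these are the polynomials of degree at most $\log_q(1/\mu_i)$, contributing $q/\mu_i$ of them, and when $\mu_i > 1$ only $c_i = 0$ works. Multiplying over $i$ yields the first formula.

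For the ``in particular'' assertion, the hypothesis that $\mathcal{C}$ contains a fundamental domain for $\Lambda$ translates, via the map $h^{-1}$, to $\mathcal{O}_{\mathcal{K}}^{n+1}$ containing a fundamental domain for $\Lambda'$. Reading off the orthogonal basis, this is equivalent to $\mu_i \leq q$ for all $i$: in direction $\mathbf{v}_i$, the best one can approximate a generic element of $\mathcal{K}$ by elements of $\mathcal{R}$ is to within $q^{-1}$. Under $\mu_i \leq q$, one has $\lceil q/\mu_i \rceil = q/\mu_i$, and combining with Theorem \ref{thm:Mink2nd} applied to $h^{-1}u$, which gives $\prod \mu_i = |\det(h^{-1}u)| = |\det(u)|/|\det(h)|$, yields $\prod \lceil q/\mu_i \rceil = q^{n+1}/\prod \mu_i = q^{n+1}|\det(h)|/|\det(u)| = q^{n+1} \operatorname{Vol}(\mathcal{C})/|\det(u)|$.

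The main obstacle is constructing the orthogonal basis with the norm-realization property, which is delicate when successive minima coincide. One approach is a non-archimedean Gram–Schmidt-type procedure where, at each stage, basis vectors are chosen in distinct residue classes modulo the maximal ideal of $\mathcal{O}_{\mathcal{K}}$; alternatively, one may factor $u$ through an Iwasawa-type decomposition involving $\operatorname{GL}(n+1, \mathcal{O}_{\mathcal{K}})$ and a diagonal piece.
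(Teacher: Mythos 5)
Your outline is correct and is the standard argument: translate by $h^{-1}$ to reduce to counting $\Lambda' \cap \mathcal{O}_{\mathcal{K}}^{n+1}$ where $\Lambda' = h^{-1}u\mathcal{R}^{n+1}$, produce a non-archimedean orthogonal $\mathcal{R}$-basis $\mathbf{v}_1,\dots,\mathbf{v}_{n+1}$ with $\|\mathbf{v}_i\|=\mu_i$, and count coordinate-by-coordinate. The coordinate count ($q^{k+1}=q/\mu_i$ polynomials of degree $\leq k$ when $\mu_i=q^{-k}\leq 1$; only $c_i=0$ when $\mu_i\geq q$) matches $\lceil q/\mu_i\rceil$, and the ``in particular'' part follows from the covering-radius observation ($\mathcal{O}_{\mathcal{K}}^{n+1}$ contains a fundamental domain for $\Lambda'$ iff $q^{-1}\mu_{n+1}\leq 1$) combined with Theorem~\ref{thm:Mink2nd} and $\operatorname{Vol}(\mathcal{C})=|\det h|$. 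For the record, the paper does not prove this lemma at all — it is quoted directly from Bagshaw--Kerr~\cite{BagshawKerr} — so there is no internal proof to compare against.

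The step you explicitly leave open — existence of the orthogonal, minima-attaining basis — is the real content here, and one of your two sketches does not work as stated. Writing $h^{-1}u = k_1 a k_2$ with $k_1,k_2\in\operatorname{GL}(n+1,\mathcal{O}_{\mathcal{K}})$ (the usual Cartan/Iwasawa factorizations) lets you absorb $k_1$ into $\mathcal{O}_{\mathcal{K}}^{n+1}$, but you cannot absorb $k_2$ into $\mathcal{R}^{n+1}$, since $\operatorname{GL}(n+1,\mathcal{O}_{\mathcal{K}})$ does not stabilize $\mathcal{R}^{n+1}$. What you actually need is the reduction-theoretic decomposition $h^{-1}u = ka\gamma$ with $k\in\operatorname{GL}(n+1,\mathcal{O}_{\mathcal{K}})$, $a$ diagonal over $\mathcal{K}$, and $\gamma\in\operatorname{GL}(n+1,\mathcal{R})$; then $\Lambda'=ka\mathcal{R}^{n+1}$ and the columns $\mathbf{v}_i=ka\mathbf{e}_i$ furnish the orthogonal basis, because $k$ is a sup-norm isometry. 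This decomposition (equivalently, the orthogonal-basis lemma) is indeed what the non-archimedean geometry of numbers in Mahler~\cite{Mahler} gives, so your citation covers it; you should just replace the ``$KAK$''-type sketch with the $KA\Gamma$ version, or else carry out the successive-shortest-vector construction in full.
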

We generalize the above result for specific discrete subgroups of the form $g\Lambda \subseteqq \K_{\ell}^{n+1}$ with $g$ being diagonal over $\K_{\ell}$ and $\Lambda$ being a lattice in $\K$  under the assumption $ \lambda_{n+1}(g\Lambda) \leq q^{c} $ for some $c > 0$. To proceed, we first need to establish the following straightforward lemma: 
\begin{lemma}\label{fundamental domain}
    Let $n \in \N$ and $\Lambda \subseteqq \K^{n+1}$ be a lattice. Suppose that $\mathbf{v}_1, \dots, \mathbf{v}_{n+1}$ are linearly independent vectors in $\Lambda$. Then there exists a basis $\mathbf{w}_1, \dots, \mathbf{w}_{n+1}$ of $\Lambda$ such that
\[
        \mathbf{v}_1=a_{11}\mathbf{w}_1,
        \mathbf{v}_2=a_{21}\mathbf{w}_1+ a_{22}\mathbf{w}_2, \dots, 
        \mathbf{v}_{n+1}=a_{(n+1)1}\mathbf{w}_1+\dots +a_{(n+1)(n+1)}\mathbf{w}_{n+1},\] where $a_{ij}$s are from $\mathcal{R}$, satisfying the following:
    \begin{equation}\label{coeff relation}
        a_{ii}\neq 0, |a_{ii}|\geq |a_{ij}|,\text{ for }i=1, \dots, n+1, j=1, \dots, i.
    \end{equation}
\end{lemma}
Lemma \ref{fundamental domain} can easily be proved by adapting the argument given in the proofs of \cite[ Theorem 1 and Corollary 1, \S 1.2]{cassels1971}. As the argument therein is quite elementary, we leave the details to the reader. However, the conditions mentioned above in \eqref{coeff relation} are important for us. A simple induction argument shows that \[\{\mathbf{w}_1, \dots, \mathbf{w}_{n+1}\}\subseteqq\mathcal{O}_{\K}\mathbf{v}_1+ \mathcal{O}_{\K}\mathbf{v}_2+ \dots +\mathcal{O}_{\K}\mathbf{v}_{n+1}.\]Consequently, the parallelepiped spanned by the vectors $\mathbf{v}_1, \dots, \mathbf{v}_{n+1}$ contains a fundamental domain of the lattice $\Lambda$. 
\begin{lemma}
\label{lem:DiagImLattPointCnt}
 
    Let $g=\operatorname{diag}\left(T^{a_1+\frac{\alpha_1}{\ell}},\dots,T^{a_{n+1}+\frac{\alpha_{n+1}}{\ell}}\right)\in \operatorname{GL}(n+1,\K_{\ell})$, where $a_i\in \mathbb{Z}$ and $\alpha_i=0,1,\dots,\ell-1$. Consider any $c \in \mathbb{R}$ such that $\lambda_{n+1}(g\Lambda)\leq q^{c}$, where $\Lambda=u\mathcal{R}^{n+1}$ and $u\in \operatorname{GL}(n+1,\K)$. Then,
    $$\#\left\{\mathbf{v}\in g\Lambda:\Vert \mathbf{v}\Vert\leq q^{c}\right\}\leq \frac{q^{(n+1)(c+1)}}{\vert \det(g)\vert\cdot \vert \det(u)\vert}.$$
    \end{lemma}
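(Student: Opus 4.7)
The plan is to reduce the count over the discrete subgroup $g\Lambda \subset \mathcal{K}_{\ell}^{n+1}$ to a count over an honest lattice in $\mathcal{K}^{n+1}$, so that Lemma~\ref{lem:BKCount} applies. I would factor $g = g_1 g_2$, where $g_1 := \operatorname{diag}(T^{a_1},\dots,T^{a_{n+1}}) \in \operatorname{GL}(n+1,\mathcal{K})$ carries the integer part of the exponents and $g_2 := \operatorname{diag}(T^{\alpha_1/\ell},\dots,T^{\alpha_{n+1}/\ell})$ carries the fractional part. Setting $\Lambda' := g_1\Lambda = (g_1 u)\mathcal{R}^{n+1}$, a lattice in $\mathcal{K}^{n+1}$, one has $g\Lambda = g_2\Lambda'$. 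Since the $i$-th component of $g_2 \mathbf{w}$ has absolute value $q^{\alpha_i/\ell}|w_i|$ and $|w_i| \in q^{\mathbb{Z}} \cup \{0\}$ for $\mathbf{w} \in \mathcal{K}^{n+1}$, the condition $\|g_2 \mathbf{w}\| \leq q^c$ is equivalent to $|w_i| \leq q^{\lfloor c - \alpha_i/\ell \rfloor}$ for every $i$. Writing $h := \operatorname{diag}(T^{\lfloor c - \alpha_1/\ell \rfloor},\dots,T^{\lfloor c - \alpha_{n+1}/\ell \rfloor})$ and $\mathcal{C} := h\mathcal{O}_{\mathcal{K}}^{n+1}$, the cardinality to bound becomes $\#(\Lambda' \cap \mathcal{C})$.

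Next I would verify that $\mathcal{C}$ contains a fundamental domain for $\Lambda'$, so that the sharper form of Lemma~\ref{lem:BKCount} becomes available. Since $\|g_2^{-1}\| \leq 1$, applying $g_2^{-1}$ to any family of $n+1$ linearly independent vectors of $g\Lambda$ of norm at most $q^c$ produces such a family in $\Lambda'$; this is exactly the argument of inequality~\eqref{succ:2} in the proof of Theorem~\ref{thm:Duality}, and gives $\lambda_{n+1}(\Lambda') \leq \lambda_{n+1}(g\Lambda) \leq q^c$. By the non-archimedean orthogonalization procedure for lattices over $\mathcal{R}$ (going back to Mahler~\cite{Mahler}), there exists an $\mathcal{R}$-basis $\mathbf{v}_1,\dots,\mathbf{v}_{n+1}$ of $\Lambda'$ with $\|\mathbf{v}_i\| = \lambda_i(\Lambda') \leq q^{\lfloor c \rfloor}$. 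The parallelepiped $\mathcal{F} := \{\sum_i t_i \mathbf{v}_i : |t_i| < 1\}$ is then a fundamental domain for $\Lambda'$, and the ultrametric inequality yields $|w_j| \leq q^{-1}\max_i \|\mathbf{v}_i\| \leq q^{\lfloor c \rfloor - 1}$ for every coordinate $j$ of $\mathbf{w} = \sum_i t_i \mathbf{v}_i \in \mathcal{F}$. Since $\alpha_j/\ell < 1$ forces $\lfloor c - \alpha_j/\ell \rfloor \geq \lfloor c \rfloor - 1$, this gives $\mathcal{F} \subseteq \mathcal{C}$ as required.

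With $\mathcal{F} \subseteq \mathcal{C}$ in hand, Lemma~\ref{lem:BKCount} gives $\#(\Lambda' \cap \mathcal{C}) = q^{n+1}\operatorname{Vol}(\mathcal{C})/|\det(g_1 u)|$. Straightforward computations show $\operatorname{Vol}(\mathcal{C}) = \prod_i q^{\lfloor c - \alpha_i/\ell \rfloor} \leq q^{(n+1)c - \sum_i \alpha_i/\ell}$ and $|\det(g_1 u)|\cdot\prod_i q^{\alpha_i/\ell} = |\det g|\,|\det u|$, from which the claimed upper bound $q^{(n+1)(c+1)}/(|\det g|\,|\det u|)$ falls out after cancellation of the $\prod_i q^{\alpha_i/\ell}$ factors. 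The main obstacle I foresee is the orthogonalization step: although the non-archimedean theory of lattices does supply an $\mathcal{R}$-basis realizing the successive minima, an arbitrary choice of successive-minima vectors need not form such a basis, so a careful citation or a short self-contained argument is needed to guarantee that $\mathcal{F}$ really is a fundamental domain for all of $\Lambda'$.
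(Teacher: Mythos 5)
Your proof is correct, and read closely it is slightly sharper than the argument as written in the paper. Both proofs split $g$ into the integer part $[g] = \operatorname{diag}(T^{a_1},\dots,T^{a_{n+1}})$ and the fractional part $\{g\} = \operatorname{diag}(T^{\alpha_1/\ell},\dots,T^{\alpha_{n+1}/\ell})$, reduce the count on $g\Lambda$ to a lattice-point count on $[g]\Lambda \subset \mathcal{K}^{n+1}$, and invoke Lemma~\ref{lem:BKCount}. The key divergence is the choice of convex body. The paper takes the cube $T^{\lfloor c\rfloor}\mathcal{O}_{\mathcal{K}}^{n+1}$; you instead take the sharper rectangular box $\mathcal{C} = h\mathcal{O}_{\mathcal{K}}^{n+1}$ with $h = \operatorname{diag}(T^{\lfloor c - \alpha_1/\ell\rfloor},\dots,T^{\lfloor c-\alpha_{n+1}/\ell\rfloor})$, which is precisely the set of $\mathbf{w}\in\mathcal{K}^{n+1}$ with $\|\{g\}\mathbf{w}\|\leq q^c$. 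This refinement is exactly what produces $|\det g|$ rather than $|\det[g]|$ in the denominator: $\operatorname{Vol}(\mathcal{C}) \leq q^{(n+1)c - \sum_i\alpha_i/\ell}$, and the surplus factor $q^{-\sum_i\alpha_i/\ell}$ converts $|\det[g]|$ into $|\det g|$. The coarser cube used in the paper only yields $q^{(n+1)(c+1)}/\bigl(|\det[g]|\,|\det u|\bigr)$, and since $|\det g|\geq|\det[g]|$ (as the paper itself records), the paper's final displayed inequality
\[
\frac{q^{(n+1)(c+1)}}{|\det[g]|\,|\det u|}\leq\frac{q^{(n+1)(c+1)}}{|\det g|\,|\det u|}
\]
is in fact reversed as written; your finer box is exactly what repairs this step and lands cleanly on the stated bound. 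Finally, the orthogonalization concern you flag at the end is benign here: the equality in the non-archimedean Minkowski second theorem (Theorem~\ref{thm:Mink2nd}) is itself established via Mahler's construction of an $\mathcal{R}$-basis $\mathbf{v}_1,\dots,\mathbf{v}_{n+1}$ realizing the successive minima, so such a basis always exists, and $\{\sum_i t_i\mathbf{v}_i : |t_i|<1\}$ is then a genuine fundamental domain contained in $\mathcal{C}$.
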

\begin{proof}
    Denote $[g]=\operatorname{diag}(T^{a_1},\dots,T^{a_{n+1}})$ and $\{g\}=\operatorname{diag}\left(T^{\frac{\alpha_1}{\ell}},\dots,T^{\frac{\alpha_{n+1}}{\ell}}\right)$. Since $g=\{g\}[g]$ and $  \vert \det(\{g\})\vert\geq 1$, clearly $\vert \det(g)\vert\geq \vert \det([g])\vert$.
   \begin{equation}\label{obs}
       \Vert [g]\mathbf{v}\Vert=\Vert \{g\}^{-1}{g}\mathbf{v}\Vert\leq\Vert{g}\mathbf{v}\Vert,\text{ for all }\mathbf{v}\in \Lambda.
   \end{equation}
   Let $g\mathbf{v}_1,\dots,g\mathbf{v}_{n+1}$ be linearly independent vectors in $g\Lambda$ with $\Vert g\mathbf{v}_i\Vert=\lambda_i(g\Lambda)$. Clearly, $[g]\mathbf{v}_1,\dots,[g]\mathbf{v}_{n+1}$ are linearly independent. Then it follows from \eqref{obs} and the hypothesis $\lambda_{n+1}(g\Lambda)\leq q^{c}$ that $\lambda_{n+1}([g]\Lambda)\leq  q^{c}$. Furthermore, \eqref{obs} yields that the parallelepiped spanned by the vectors $[g]\mathbf{v}_1,\dots,[g]\mathbf{v}_{n+1}$ inside $\K^{n+1}$ is actually contained in $T^{\lfloor{c}\rfloor}\mathcal{O}_{\K}^{n+1}$, where $\lfloor c\rfloor=\max\{n\in \mathbb{Z}:n\leq c\}$. As a consequence,  $T^{\lfloor{c}\rfloor}\mathcal{O}_{\K}^{n+1}$ contains a fundamental domain of $\Lambda$. Thus, by \eqref{obs} and Lemma \ref{lem:BKCount}, 

    \[
        \#\left\{\mathbf{v}\in g\Lambda:\Vert \mathbf{v}\Vert\leq q^{c}\right\}\leq \#\{\mathbf{v}\in [g]\Lambda:\Vert \mathbf{v}\Vert\leq q^{\lfloor{c}\rfloor}\}
        \leq \frac{q^{(n+1)(c+1)}}{\vert \det([g])\vert\cdot \vert\det(u)\vert} \leq\frac{q^{(n+1)(c+1)}}{\vert \det(g)\vert\cdot \vert \det(u)\vert}.
    \]
     
\end{proof}
\section{Analysis on generic and special part}\label{sec:analysisonparts}
\subsection{Some Preliminary Estimates}
\label{sec:Ests}
In this subsection, we provide several estimates that will be used to prove Theorems \ref{thm:main} and \ref{thm:DimConv}. Recall that the manifold $\mathscr{M}$ is defined by the map $\mathbf{f}: U\subseteqq \K^d \to \K^n$, where
$$
\mathbf{f} \df (x_1, \ldots, x_d, f_1(\x), \ldots, f_m(\x)) = (\x, \bm{f}(\x)), \,\, \text{for}\,\, \x=(x_1,\dots,x_d) \in U
$$
with $d = \dim \mathscr{M}$, $m= \codim \mathscr{M} $, and $U\subseteqq \K^d$ being an open set. We also assume that the defining map $\mathbf{f}$ is analytic. 
\begin{lemma}
\label{lem:Qf(x)-(P+thetaSmall)}
    Let $s,t \in \mathbb N$ and suppose that for some $\x\in U$, we have $$\left\lVert \mathbf{f}(\x)-\frac{\mathbf{P}+\boldsymbol{\Theta}}{Q}\right\rVert<q^{-(s+t)},$$ where  $\mathbf{P}=(P_1,\cdots,P_n)\in \mathcal{R}^n,$ $Q \in \mathcal{R} \setminus \{0\}$ with $\vert Q\vert=q^t$ and $\boldsymbol{\Theta}=(\Theta_1,\cdots,\Theta_n)$. Then, 
    \begin{enumerate}[label=($\mathfrak{E}$.\arabic*)]
        \item \label{eqn:i=1...dQf-(P+Theta)} $\vert Qx_i-P_i-\Theta_i\vert<q^{-s}$ for every $i=1,\dots,d$.
        \item \label{eqn:j=1...mQf-P+Theta} $\vert Qf_j(\x)-\sum_{i=1}^d\partial_if_j(\x)(Qx_i-P_i-\Theta_i)-P_{d+j}-\Theta_{d+j}\vert<Mq^{-s}$ for every $j=1,\dots,m$.
    \end{enumerate}
\end{lemma}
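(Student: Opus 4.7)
The plan is to read the hypothesis coordinate-wise, multiply through by $|Q|=q^t$, and then control the first-order linear correction appearing in part (2) via the ultrametric triangle inequality together with the gradient bound from assumption \ref{IV}. Since $\mathbf{f}(\mathbf{x})=(\mathbf{x},\boldsymbol{f}(\mathbf{x}))$, the sup-norm hypothesis $\|\mathbf{f}(\mathbf{x})-(\mathbf{P}+\boldsymbol{\Theta})/Q\|<q^{-(s+t)}$ splits into the $d$ coordinate-wise inequalities $|x_i-(P_i+\Theta_i)/Q|<q^{-(s+t)}$ for $i=1,\dots,d$ and the $m$ inequalities $|f_j(\mathbf{x})-(P_{d+j}+\Theta_{d+j})/Q|<q^{-(s+t)}$ for $j=1,\dots,m$. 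Multiplying the first family through by $|Q|=q^t$ and using that the absolute value on $\K$ is multiplicative immediately yields part (1).

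For part (2), the same multiplication applied to the remaining $m$ inequalities produces $|Qf_j(\mathbf{x})-P_{d+j}-\Theta_{d+j}|<q^{-s}$. To absorb the linear correction, I will use that by \ref{IV} we have $|\partial_i f_j(\mathbf{x})|\leq \|\nabla\boldsymbol{f}(\mathbf{x})\|\leq 1$ for each $i$, and by part (1), $|Qx_i-P_i-\Theta_i|<q^{-s}$. Hence each summand $|\partial_i f_j(\mathbf{x})(Qx_i-P_i-\Theta_i)|$ is at most $q^{-s}$. Two applications of the non-archimedean triangle inequality—first over the sum in $i$, then to combine the resulting bound with $|Qf_j(\mathbf{x})-P_{d+j}-\Theta_{d+j}|<q^{-s}$—give the desired estimate. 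The factor $\max\{1,M\}$ in the stated bound is a cosmetic buffer that would also accommodate an alternative derivation via the second-order Taylor expansion \cref{lem:Taylor} of $\boldsymbol{f}$ around the nearby ``rational'' point $((P_1+\Theta_1)/Q,\dots,(P_d+\Theta_d)/Q)$, where the quadratic remainder inherits the bound $M$ on the second-order difference quotients.

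There is essentially no obstacle here: the strong form of the non-archimedean triangle inequality is exactly what makes the argument tight, since the maximum of finitely many terms of size $q^{-s}$ is itself $q^{-s}$ rather than growing with the number of summands, so no constants are lost in combining the pieces. The only care needed is in matching the sign conventions coordinate-by-coordinate when expanding $(\mathbf{P}+\boldsymbol{\Theta})/Q$.
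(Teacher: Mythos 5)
Your proof is correct and follows essentially the same route as the paper: read the sup-norm hypothesis coordinate-wise, multiply by $|Q|=q^t$ to get part (1) and the bound $|Qf_j(\mathbf{x})-P_{d+j}-\Theta_{d+j}|<q^{-s}$, then fold in the linear correction term via the ultrametric inequality together with the gradient bound from \ref{IV}. Your observation that $\max\{1,M\}$ is a harmless buffer (since $\|\nabla\boldsymbol{f}\|\leq 1$ already gives $q^{-s}$) and your note about sign conventions in $(\mathbf{P}+\boldsymbol{\Theta})/Q$ are both accurate and match how the paper's own proof actually proceeds.
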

\begin{proof}

\ref{eqn:i=1...dQf-(P+Theta)} is easily seen by considering the $i$-th coordinate of 
$\mathbf{f}(\x) - \frac{\mathbf{P} + \boldsymbol{\Theta}}{Q}$ for $i = 1, \dots, d$, 
and using $|Q| = q^t$. To prove \ref{eqn:j=1...mQf-P+Theta},  using the ultrametric inequality, for every $j=1,\dots,m$, we have
\[\begin{split}
\bigg| Q f_j(\x)
 - \sum_{i=1}^d \partial_i f_j(\x) \big(Qx_i - P_i - \Theta_i\big)
 - P_{d+j} - \Theta_{d+j} \bigg|
& \leq 
 \max \Big\{
 |Q f_j(\x) - P_{d+j} - \Theta_{d+j}|,\nonumber\\  
 &\max_{i=1,\dots,d} |\partial_i f_j(\x)| \cdot |Qx_i - P_i - \Theta_i|
 \Big\}. 
\end{split}\]
\noindent Since $
\big\|\mathbf{f}(\x) - \tfrac{\mathbf{P} + \boldsymbol{\Theta}}{Q}\big\| < q^{-(s+t)}
\,\, \text{and} \,\, |Q| = q^t
$, 
it follows that $|Q f_j(\x) - P_{d+j} - \Theta_{d+j}| < q^{-s}$. 
Combining this and 
\ref{eqn:i=1...dQf-(P+Theta)}, we get
\[
\max \left\{
|Q f_j(\x) - P_{d+j} - \Theta_{d+j}|,
\max_{i=1,\dots,d} |\partial_i f_j(\x)| \cdot |Qx_i - P_i - \Theta_i|
\right\}
< M q^{-s}.
\]
This proves  \ref{eqn:j=1...mQf-P+Theta}. 
    
\end{proof}
\begin{lemma}
\label{lem:NearPnts}
    Suppose that $\x\in U$ and $(\mathbf{P},Q)\in \mathcal{R}^{n} \times (\mathcal{R}\setminus\{0\})$ satisfy the hypothesis of Lemma \ref{lem:Qf(x)-(P+thetaSmall)}. Let $\x_0\in B\left(\x,q^{-\frac{s+t}{2}}\right)$, where $\x_0=(x_{1,0},\cdots,x_{d,0})\in U$. If $t>s>0$, then
    \begin{enumerate}[label=(\roman*)]
        \item $\vert Qx_{i,0}-P_i-\Theta_i\vert<q^{\frac{t-s}{2}}$ for $i=1,\dots,d$; and 
        \item For every $j=1,\dots,m$, we have $$\left|Qf_j(\x_0)-\sum_{i=1}^d\partial_if_j(\x_0)(Qx_{i,0}-P_i-\Theta_i)-P_{d+j}-\Theta_{d+j}\right|\leq Mq^{-s}.$$
    \end{enumerate}
\end{lemma}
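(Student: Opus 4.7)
The plan is to derive this as a ``transport'' of Lemma \ref{lem:Qf(x)-(P+thetaSmall)} from the point $\mathbf{x}$ to the nearby point $\mathbf{x}_0$, taking advantage of two crucial facts: the radius $q^{-(s+t)/2}$ is exactly tuned so that $|Q|\cdot\|\mathbf{x}-\mathbf{x}_0\|^2 \leq q^{-s}$, and the ultrametric inequality turns triangle inequalities into pointwise maxima.

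For part (1), I would just write $Qx_{i,0}-P_i-\Theta_i = (Qx_{i,0}-Qx_i) + (Qx_i - P_i - \Theta_i)$, bound the second summand by $q^{-s}$ using Lemma \ref{lem:Qf(x)-(P+thetaSmall)}(1), and bound the first by $|Q|\cdot|x_{i,0}-x_i| \leq q^t \cdot q^{-(s+t)/2} = q^{t-(s+t)/2}$ using $\mathbf{x} \in B(\mathbf{x}_0, q^{-(s+t)/2})$. The non-archimedean inequality yields the stated $\max\{q^{-s}, q^{t-(s+t)/2}\}$ bound.

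For part (2), the strategy is to combine Lemma \ref{lem:Qf(x)-(P+thetaSmall)}(2) with a first-order Taylor comparison of the affine linearisation of $f_j$ at $\mathbf{x}$ versus at $\mathbf{x}_0$. More precisely, I would first use Lemma \ref{lem:Taylor} at $\mathbf{x}_0$ with increment $\mathbf{x}-\mathbf{x}_0$ and then replace the first-order difference quotients $\bar\Phi_{e_i}f_j$ (which get evaluated at mixed arguments) by $\partial_i f_j(\mathbf{x}_0)$; the replacement error per term is controlled by the bound $M$ on the second-order difference quotients, costing $O(\|\mathbf{x}-\mathbf{x}_0\|)$, and the genuinely quadratic piece is $O(M\|\mathbf{x}-\mathbf{x}_0\|^2)$. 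This gives
\[
\bigl| Qf_j(\mathbf{x}) - Qf_j(\mathbf{x}_0) - \sum_{i=1}^d \partial_i f_j(\mathbf{x}_0)\, (Qx_i - Qx_{i,0}) \bigr| \;\leq\; M\,|Q|\,\|\mathbf{x}-\mathbf{x}_0\|^2 \;\leq\; Mq^{-s}.
\]

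Next, writing $Qx_i - Qx_{i,0} = (Qx_i - P_i - \Theta_i) - (Qx_{i,0}-P_i-\Theta_i)$ and comparing with Lemma \ref{lem:Qf(x)-(P+thetaSmall)}(2), I would regroup the quantity to be estimated as the sum of three terms: (i) the expression controlled by Lemma \ref{lem:Qf(x)-(P+thetaSmall)}(2), bounded by $\max\{1,M\}q^{-s}$; (ii) the Taylor-comparison remainder, bounded above by $Mq^{-s}$; and (iii) the cross term $\sum_i (\partial_i f_j(\mathbf{x}) - \partial_i f_j(\mathbf{x}_0))(Qx_i - P_i - \Theta_i)$, which is bounded using $|\partial_i f_j(\mathbf{x}) - \partial_i f_j(\mathbf{x}_0)|\leq M\|\mathbf{x}-\mathbf{x}_0\|\leq Mq^{-(s+t)/2}$ (again from the second-order difference-quotient bound) combined with Lemma \ref{lem:Qf(x)-(P+thetaSmall)}(1), yielding $\leq Mq^{-(s+t)/2}\cdot q^{-s}\leq Mq^{-s}$. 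The ultrametric inequality then absorbs all three contributions into $\max\{1,M\}q^{-s}$.

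The only non-routine step is (b), i.e., justifying that the replacement $\bar\Phi_{e_i}f_j(\cdot) \rightsquigarrow \partial_i f_j(\mathbf{x}_0)$ inside the first-order Taylor term costs only $O(M\|\mathbf{x}-\mathbf{x}_0\|)$; this is the one place where we must invoke the non-archimedean Lipschitz control of $\partial_i f_j$ coming from assumption \ref{IV}. Everything else is bookkeeping with the ultrametric triangle inequality, exploiting the key identity $|Q|\cdot\|\mathbf{x}-\mathbf{x}_0\|^2 \leq q^{-s}$ built into the hypothesis.
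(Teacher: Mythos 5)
Your proof is correct and follows essentially the same route as the paper. Part (1) is identical, and for part (2) you arrive at the same three-term ultrametric maximum (the Lemma~\ref{lem:Qf(x)-(P+thetaSmall)}(2) term, the quadratic Taylor remainder, and the derivative-comparison cross term), each bounded by $\max\{1,M\}q^{-s}$; the only difference is a cosmetic choice of whether the linearization uses $\partial_if_j(\mathbf{x})$ or $\partial_if_j(\mathbf{x}_0)$, which simply shifts the discrepancy between the Taylor term and the cross term without changing the bound.
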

\begin{proof}
    Let $\x_0=(x_{1,0},\cdots,x_{d,0})\in U$. By Lemma \ref{lem:Qf(x)-(P+thetaSmall)}, we see that the vector $\x$ satisfies \ref{eqn:i=1...dQf-(P+Theta)} and \ref{eqn:j=1...mQf-P+Theta}. Hence, 
    \[
        \vert Qx_{i,0}-P_i-\Theta_i\vert \leq \max\left\{\vert Qx_i-P_i-\Theta_i\vert,\vert Q\vert\cdot \vert x_i-x_{i,0}\vert\right\} \leq \max\left\{q^{-s},q^{\frac{t-s}{2}}\right\}=q^{\frac{t-s}{2}}.\]
  Also,  for every $j=1,\dots, m$,  it is easy to see that

 \begin{equation*}
    \begin{split}
        \left|Qf_j(\x_0)-\sum_{i=1}^d\partial_if_j(\x_0)(Qx_{i,0}-P_i-\Theta_i)-P_{d+j}-\Theta_{d+j}\right|\\\leq 
        \max\Bigg\{\left|Qf_j(\x)-\sum_{i=1}^d\partial_if_j(\x)(Qx_i-P_i-\Theta_i)-P_{d+j}-\Theta_{d+j}\right|,\\
        \vert Q\vert \cdot \left|f_j(\x_0)-f_j(\x)-\sum_{i=1}^d\partial_if_j(\x)(x_{i,0}-x_i)\right|,\\
       \max_{i=1,\dots,d}\vert \partial_if_j(\x_0)-\partial_if_j(\x)\vert\cdot \vert Qx_i-P_i-\Theta_i\vert
        \Bigg\}, \\
        \leq \max\left\{M q^{-s},q^t M \left(q^{\frac{-t-s}{2}}\right)^2,Mq^{-s}\right\} =Mq^{-s},
        \end{split} 
    \end{equation*} in view of \ref{eqn:j=1...mQf-P+Theta},  \ref{IV}, and Taylor's formula  \eqref{Taylor formula-second}.
\end{proof}
\subsection{Analysis on the Generic Part}
\label{subsec:Generic} Recall that $\K_{2(n+1)}$ denotes the extended field $\K\left(T^{\frac{1}{2(n+1)}}\right).$
We first define the generic part. To do so, we first define the diagonal flow $g_{s,t} \in \operatorname{SL}\left(n+1,\mathcal{K}_{2(n+1)}\right)$ as
$$g_{s,t} \df\operatorname{diag}\left\{\underbrace{T^{\frac{(d+2)t+2s}{2(n+1)}},\dots, T^{\frac{(d+2)t+2s}{2(n+1)}}}_m,\overbrace{T^{\frac{(d+2)t+2s}{2(n+1)}-\frac{s+t}{2}},\dots,T^{\frac{(d+2)t+2s}{2(n+1)}-\frac{s+t}{2}}}^d, T^{\frac{(d+2)t+2s}{2(n+1)}-s-t}\right\}.$$ For $\x\in U$, denote by $J(\x)$ the Jacobian matrix of $\mathbf{f}$ at $\x$, i.e., $$J(\x)\df \left( \frac{\partial f_i}{\partial x_j}(\x) \right)_{1\leq i\leq m, 1\leq j\leq d}.$$
Given $k\in \N$, we let $\sigma_k$ stand for the following $k \times k$ matrix: 
\[\sigma_{k}\df\begin{pmatrix}
    0 & 0 & \hdots & 0 & 1\\
    0 & 0 & \hdots & 1 & 0\\
    \vdots&\vdots&\ddots& \vdots & \vdots\\
    0 & 1 & \hdots & 0 & 0\\
    1 & 0 & \hdots & 0 & 0
    \end{pmatrix}.\]
    Observe that $\sigma_k$ acts on row vectors from the right and on column vectors from the left by reversing the order of their coordinates. $\sigma_k$ is an involution. Clearly $\sigma_k^{-1}=\sigma_k$. We now also consider the following matrices in $\operatorname{SL}(n+1,\K)$:
$$z(\x) \df\begin{pmatrix}
    \mathrm{I}_m&-\sigma_m^{-1}J(\x)\sigma_d&0\\
    0&\mathrm{I}_d&0\\
    0&0&1
\end{pmatrix},~u(\x) \df \begin{pmatrix}
    \mathrm{I}_n&\sigma_n^{-1}\mathbf{f}(\x)^t\\
    0 &  1\\
\end{pmatrix},\text{ and }u_1(\x) \df z(\x)u(\x).$$

It can be seen that $u_1(\x)$ has the following form:
\begin{equation*}
    u_1(\x)=\begin{pmatrix}
        1&0&\dots&0&-\partial_df_m(\x)&\dots&-\partial_1f_m(\x)&f_m(\x)-\sum_{i=1}^dx_i\partial_if_m(\x)\\
        \vdots&\vdots&\vdots&\vdots&\vdots&\vdots&\vdots&\vdots\\
        0&0&\vdots&1&-\partial_df_1(\x)&\dots&-\partial_1f_1(\x)&f_1(\x)-\sum_{i=1}^dx_i\partial_if_1(\x)\\
        0&0&\dots&0&1&0&\dots&x_d\\
        \vdots&\vdots&\vdots&\vdots&\vdots&\vdots&\vdots&\vdots\\
        0&0&\dots&0&0&\dots&1&x_1\\
        0&0&\dots&0&0&\dots&0&1
    \end{pmatrix}
\end{equation*}
We define a new operator $^{\star}$ on $\operatorname{GL}(n+1,\K_{\ell})$ by  
\[
g^{\star} \df \sigma_{n+1}^{-1} \left( g^t \right)^{-1} \sigma_{n+1} \quad \text{for } g \in\operatorname{GL}(n+1,\K_{\ell}).
\] Note that given any $g_1,g_2 \in \operatorname{GL}(n+1,\K_{\ell}),$ one has $(g_1 g_2)^{\star}=g_1^{\star}g_2^{\star}$. Since $\sigma_{n+1}$ acts by a mere permutation of the coordinates, for any $g\in \operatorname{GL}(n+1,\K_{\ell})$, we have $\lambda_i\left(\left(\sigma_{n+1}^{-1}g\sigma_{n+1}\right)\mathcal{R}^{n+1}\right)=\lambda_i(g\mathcal{R}^{n+1})$ for every $i=1,\dots,n+1$. By replacing $g$ with $\left(g^t\right)^{-1}$, for every $i=1,\dots,n+1$, we have $\lambda_i\left(g^{\star}\mathcal{R}^{n+1}\right)=\lambda_i\left(g^*\mathcal{R}^{n+1}\right)$, where $g^*=(g^t)^{-1}$ is the dual of $g$ as defined in \cref{sec:geometry of numbers}.\\

Define \begin{equation}\label{Mnew}
\begin{split}
    \mathfrak{M}_0(s,t) \df\left\{\x\in U:\lambda_{1}(g_{s,t}^{\star}u_1^{\star}(\x)\mathcal{R}^{n+1})<q^{-\frac{(d+2)t-2ns}{2(n+1)}}\right\}\\= \left\{\x\in U:\lambda_{1}(g_{s,t}^*u_1^*(\x)\mathcal{R}^{n+1})<q^{-\frac{(d+2)t-2ns}{2(n+1)}}\right\},
\end{split}
\end{equation}
and
$$\mathfrak{M}(s,t)=\bigcup_{\x\in \mathfrak{M}_0(s,t)}B\left(\x,q^{-\frac{s+t}{2}}\right).$$ 
Henceforth, we define the \emph{generic part} of the manifold as $U\setminus \mathfrak{M}(s,t)$ and the \emph{special part} as $\mathfrak{M}(s,t)$. \\

We first estimate the number of points in the manifold's generic part. Towards this end, we prove the following proposition.
\begin{proposition}\label{prop:counting generic}
Let $\mathbf{f}$ and $U$ be as in Proposition \ref{prop:Key}. Then, given any ball $B \subseteqq U$, the following holds for sufficiently large $t\in \N$: 
\begin{equation}\label{eqn:GenCnt}\mathcal{N}_{\boldsymbol{\Theta}}(B \setminus \mathfrak{M}(s, t);s,t)\ll q^{(d+1)t}q^{-ms}\mathcal{L}_d(B),~\forall s\in \N.
\end{equation}
\end{proposition}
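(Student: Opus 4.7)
My strategy is to cover the generic part by small disjoint balls of radius $q^{-(s+t)/2}$ and, on each small ball, translate the rational approximation condition into a count of points in a coset of the discrete subgroup $g_{s,t}u_1(\x_0)\mathcal{R}^{n+1}\subset\K_{2(n+1)}^{n+1}$ lying in a convex body.

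First, I apply Besicovitch's covering theorem (Theorem \ref{thm:Covering}) to the cover $\{B(\x,q^{-(s+t)/2})\}_{\x\in B\setminus\mathfrak{M}(s,t)}$ and extract a countable disjoint subfamily $\{B_k = B(\x_k,q^{-(s+t)/2})\}$. Since $\mathfrak{M}(s,t)$ is itself the union of balls of radius $q^{-(s+t)/2}$ centered at the points of $\mathfrak{M}_0(s,t)$, every center $\x_k$ automatically lies outside $\mathfrak{M}_0(s,t)$, and the number of such balls is at most $\mathcal{L}_d(B)\cdot q^{d(s+t)/2}$. Next, I fix one such ball $B_k$ with center $\x_0$ and encode each $(\p,Q)\in\mathcal{R}_{\boldsymbol{\Theta}}(B_k,s,t)$ as the column vector $\mathbf{w}=(-(P_n+\Theta_n),\ldots,-(P_1+\Theta_1),Q)^t$, which lies in a fixed coset of $\mathcal{R}^{n+1}$. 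A direct multiplication using the explicit form of $u_1(\x_0)$ displayed in the text shows that the first $m$ coordinates of $u_1(\x_0)\mathbf{w}$ are exactly the quantities bounded in Lemma \ref{lem:NearPnts}(2) by $\max\{1,M\}q^{-s}$, the next $d$ coordinates are the quantities bounded in Lemma \ref{lem:NearPnts}(1) by $\max\{q^{-s},q^{(t-s)/2}\}$, and the last coordinate equals $Q$ with $|Q|=q^t$. The three diagonal blocks of $g_{s,t}$, scaling by $q^{A}$, $q^{A-(s+t)/2}$, $q^{A-(s+t)}$ with $A=\tfrac{(d+2)t+2s}{2(n+1)}$, normalize these three blocks to a single common bound
\[ \|g_{s,t}\,u_1(\x_0)\,\mathbf{w}\| \;\ll_{n,\mathbf{f}}\; q^{A-s} \;=\; q^{\frac{(d+2)t-2ns}{2(n+1)}}. \]

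The hypothesis $\x_0\notin\mathfrak{M}_0(s,t)$ gives, by definition of $\mathfrak{M}_0$, the lower bound $\lambda_1(g_{s,t}^\star u_1^\star(\x_0)\mathcal{R}^{n+1})\geq q^{-\frac{(d+2)t-2ns}{2(n+1)}}$. Applying the duality Theorem \ref{thm:Duality} with $\ell=2(n+1)$, together with the identity $\lambda_i(g^\star\Lambda^*)=\lambda_i(g^*\Lambda^*)$ noted before \eqref{Mnew}, converts this into the dual upper bound $\lambda_{n+1}(g_{s,t}u_1(\x_0)\mathcal{R}^{n+1})\leq C_1 q^{\frac{(d+2)t-2ns}{2(n+1)}}$. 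Since $\det u_1(\x_0)=1$ and a one-line computation yields $|\det g_{s,t}|=q^{-ds/2}$, Lemma \ref{lem:DiagImLattPointCnt} applies with $c=\tfrac{(d+2)t-2ns}{2(n+1)}+O(1)$ (after a standard translation step to pass from the lattice to its $\boldsymbol{\Theta}$-coset), producing the per-ball count
\[ |\mathcal{R}_{\boldsymbol{\Theta}}(B_k,s,t)| \;\ll_{n,\mathbf{f}}\; \frac{q^{(n+1)(c+1)}}{q^{-ds/2}} \;\ll_{n,\mathbf{f}}\; q^{\frac{(d+2)t-(d+2m)s}{2}}. \]
Summing against the ball count then gives
\[ \mathcal{N}_{\boldsymbol{\Theta}}(B\setminus\mathfrak{M}(s,t);s,t) \;\ll_{n,\mathbf{f}}\; \mathcal{L}_d(B)\cdot q^{d(s+t)/2+\frac{(d+2)t-(d+2m)s}{2}} \;=\; \mathcal{L}_d(B)\cdot q^{(d+1)t-ms}, \]
which is precisely \eqref{eqn:GenCnt}.

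The main obstacle is the third step above: because $g_{s,t}$ has entries in $\K_{2(n+1)}\setminus\K$, the object $g_{s,t}u_1(\x_0)\mathcal{R}^{n+1}$ is only a discrete subgroup of $\K_{2(n+1)}^{n+1}$ and \emph{not} a lattice, so Mahler's classical duality (Theorem \ref{thm:MahDual}) and the Bagshaw--Kerr lattice-point count (Lemma \ref{lem:BKCount}) are not directly available. This is exactly what forces the use of the generalizations Theorem \ref{thm:Duality} and Lemma \ref{lem:DiagImLattPointCnt} of Section \ref{sec:geometry of numbers}, and the loss factor $q^{(\ell-1)/\ell}$ from \eqref{duality ineq} must be carried through and absorbed into the implicit constants. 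A secondary technical point is that the inhomogeneous shift by $\boldsymbol{\Theta}$ turns the lattice-point count into a coset-point count; this is handled by a routine translation of the convex body (equivalently, by enlarging its radius by a bounded factor), absorbing the shift at the cost of a harmless multiplicative constant.
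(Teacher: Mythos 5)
Your proposal is correct and follows essentially the same route as the paper, which proves the statement by combining Lemma~\ref{lem:GenericCNT} (a per-ball count for balls of radius $q^{-(s+t)/2}$ centered outside $\mathfrak{M}(s,t)$) with a Besicovitch-type covering argument (Lemma~\ref{lem:lemma counting1}). Your intermediate per-ball bound $q^{\frac{(d+2)t-(d+2m)s}{2}}$ is indeed identical to the paper's $q^{\frac{d}{2}(t+s)+(t-ns)}$ after using $n=d+m$, and your normalization of the three diagonal blocks of $g_{s,t}$ to a common bound $q^{\frac{(d+2)t-2ns}{2(n+1)}}$, the duality step via Theorem~\ref{thm:Duality}, and the invocation of Lemma~\ref{lem:DiagImLattPointCnt} all match the paper. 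The only cosmetic difference is how the inhomogeneous shift $\boldsymbol{\Theta}$ is handled: the paper assumes there are at least two good pairs $(\mathbf{P}_i,Q_i)$ and subtracts them to land inside the lattice $\mathcal{R}^{n+1}$, whereas you subtract a single reference point of the coset; in the ultrametric setting these are the same argument (and your remark about "enlarging the radius" is actually unnecessary here, since the ultrametric inequality preserves the radius exactly). You also correctly identified the two technical obstacles—that $g_{s,t}u_1(\x_0)\mathcal{R}^{n+1}$ is a discrete subgroup but not a lattice in $\K_{2(n+1)}^{n+1}$, necessitating Theorem~\ref{thm:Duality} and Lemma~\ref{lem:DiagImLattPointCnt} over the classical Mahler duality and Bagshaw–Kerr count, with the $q^{(\ell-1)/\ell}$ loss and the $\max\{1,M\}$ dilation of the convex body absorbed into the constants via the $+O(1)$ in $c$.
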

The proof of Proposition \ref{prop:counting generic} relies on the following counting estimate:

\begin{lemma}
\label{lem:GenericCNT} Let $\mathbf{f}$ and $U$ be as in Proposition \ref{prop:Key}. Consider $s, t\in\N$, and an open ball  $B\subseteqq U\subseteqq \K^d$. Then, for all $\x_0\in (U\setminus\mathfrak{M}(s,t))\cap B$,
    $$\mathcal{N}_{\boldsymbol{\Theta}}\left(B\left(\x_0,q^{-\frac{s+t}{2}}\right)\cap B; s,t\right)\ll q^{\frac{d}{2}(t+s)+(t-ns)}.$$
\end{lemma}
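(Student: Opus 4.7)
The plan is to recode the count as a lattice-point count in $g_{s,t}u_1(\mathbf{x}_0)\mathcal{R}^{n+1}\subset\K_{2(n+1)}^{n+1}$ and then control it via the generic-part hypothesis using the duality and counting results of Section~\ref{sec:geometry of numbers}. Concretely, for every $(\mathbf{P},Q)$ in the counted set I pick a witness $\mathbf{x}\in B(\mathbf{x}_0,q^{-(s+t)/2})\cap U$. Applying Lemma~\ref{lem:NearPnts} \emph{at} $\mathbf{x}_0$ then bounds the $d$ quantities $Qx_{0,i}-P_i-\Theta_i$ by $q^{(t-s)/2}$ (the maximum in Lemma~\ref{lem:NearPnts}(1) equals $q^{(t-s)/2}$ whenever $s,t>0$) and the $m$ Taylor-linearized quantities of Lemma~\ref{lem:NearPnts}(2) by $\max\{1,M\}q^{-s}$.

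Reading off the explicit form of $u_1(\mathbf{x}_0)$ given in Section~\ref{subsec:Generic}, these $n$ quantities (after a $\sigma$-type reordering and sign change) are precisely the first $n$ coordinates of $u_1(\mathbf{x}_0)\mathbf{w}$, where
\[
\mathbf{w}=\bigl(-(P_n+\Theta_n),\dots,-(P_1+\Theta_1),\,Q\bigr)^{t};
\]
the last coordinate of $u_1(\mathbf{x}_0)\mathbf{w}$ is simply $Q$, of absolute value $q^t$. Applying the diagonal flow $g_{s,t}$, whose three diagonal weights $\alpha,\ \alpha-(s+t)/2,\ \alpha-s-t$ (with $\alpha:=\tfrac{(d+2)t+2s}{2(n+1)}$) are calibrated exactly for this purpose, every coordinate of $g_{s,t}u_1(\mathbf{x}_0)\mathbf{w}$ has absolute value at most $Cq^{\alpha-s}$ for a constant $C=C(\mathbf{f})$. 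Writing $\mathbf{w}=\mathbf{w}_0-\boldsymbol{\theta}$ with $\mathbf{w}_0=(-P_n,\dots,-P_1,Q)^{t}\in\mathcal{R}^{n+1}$ and $\boldsymbol{\theta}$ depending only on $\boldsymbol{\Theta}$, the image $g_{s,t}u_1(\mathbf{x}_0)\mathbf{w}_0$ lies in a fixed translate $\mathbf{y}_0+B(\mathbf{0},Cq^{\alpha-s})$ of a ball in $\K_{2(n+1)}^{n+1}$. Since the sup norm on $\K_{2(n+1)}^{n+1}$ is ultrametric, this translate is a coset of the additive subgroup $B(\mathbf{0},Cq^{\alpha-s})$, so it intersects $g_{s,t}u_1(\mathbf{x}_0)\mathcal{R}^{n+1}$ in at most $\#\bigl\{v\in g_{s,t}u_1(\mathbf{x}_0)\mathcal{R}^{n+1}:\|v\|\leq Cq^{\alpha-s}\bigr\}$ points; the map $(\mathbf{P},Q)\mapsto g_{s,t}u_1(\mathbf{x}_0)\mathbf{w}_0$ being injective, this is an upper bound for the counting function.

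It remains to estimate the lattice-point count. The hypothesis $\mathbf{x}_0\in U\setminus\mathfrak{M}(s,t)$ in particular forces $\mathbf{x}_0\notin\mathfrak{M}_0(s,t)$, so by \eqref{Mnew} one has $\lambda_1\bigl(g_{s,t}^{*}u_1^{*}(\mathbf{x}_0)\mathcal{R}^{n+1}\bigr)\geq q^{-(\alpha-s)}$. The duality theorem (Theorem~\ref{thm:Duality}, applied with $\ell=2(n+1)$) converts this into the last-successive-minimum bound $\lambda_{n+1}\bigl(g_{s,t}u_1(\mathbf{x}_0)\mathcal{R}^{n+1}\bigr)\leq q^{(\ell-1)/\ell+\alpha-s}$. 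Since $u_1(\mathbf{x}_0)\in\operatorname{SL}(n+1,\K)$ and summing the diagonal exponents of $g_{s,t}$ yields $|\det g_{s,t}|=q^{-ds/2}$, Lemma~\ref{lem:DiagImLattPointCnt} applied with $c=\alpha-s+O(1)$ produces a count of order $q^{(n+1)(\alpha-s)+ds/2+O(1)}$, which after simplification equals $q^{\frac{d(t+s)}{2}+t-ns+O(1)}$, the desired bound. The main technical obstacles are (i) the bookkeeping of the $\sigma_m,\sigma_d,\sigma_{n+1}$ reversals in $u_1$ when identifying the coordinates of $u_1(\mathbf{x}_0)\mathbf{w}$ with the expressions in Lemma~\ref{lem:NearPnts}, and (ii) the loss factor $q^{(\ell-1)/\ell}$ introduced by passing through the ramified extension $\K_{2(n+1)}$, which is precisely why the generalized duality (Theorem~\ref{thm:Duality}) and counting lemma (Lemma~\ref{lem:DiagImLattPointCnt}) of Section~\ref{sec:geometry of numbers} had to be developed.
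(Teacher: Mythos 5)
Your proof is correct and follows essentially the same route as the paper: linearize at $\mathbf{x}_0$ via Lemma~\ref{lem:NearPnts}, apply the flow $g_{s,t}$, convert the $\mathfrak{M}(s,t)$-avoidance via Theorem~\ref{thm:Duality} into an upper bound on $\lambda_{n+1}\bigl(g_{s,t}u_1(\mathbf{x}_0)\mathcal{R}^{n+1}\bigr)$, and count with Lemma~\ref{lem:DiagImLattPointCnt}. The only difference is cosmetic --- the paper removes $\boldsymbol{\Theta}$ by taking the difference $(\mathbf{P}_2-\mathbf{P}_1,Q_2-Q_1)$ of two solutions, landing directly in $g_{s,t}u_1(\mathbf{x}_0)\mathcal{R}^{n+1}$, whereas you keep $\boldsymbol{\Theta}$ and observe that the shifted ball, being a coset of the (additive) ultrametric ball subgroup, meets the discrete subgroup in at most as many points as the ball centered at the origin; both yield the identical lattice-point count.
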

\begin{proof}
\color{black}
   Let $\mathbf{x}_0=(x_{1,0},\dots,x_{d,0})$. Without loss of generality, assume that $\mathcal{N}_{\boldsymbol{\Theta}}\left(B(\x_0,q^{-\frac{s+t}{2}})\cap B;s,t\right)$ is greater than \(1\). Then for $i=1,2$ there exist $(\mathbf{P}_i,Q_i)\in \mathcal{R}^{n} \times \mathcal{R} \setminus \{0\}$ and $\x_i\in B\left(\x_0,q^{-\frac{s+t}{2}}\right)$, such that 
    $\left\Vert\mathbf{f}(\x_i)-\frac{\mathbf{P}_i+\boldsymbol{\Theta}}{Q_i}\right\Vert<q^{-(s+t)}$ with $|Q_i|=q^t.$
    Hence, by Lemma \ref{lem:NearPnts}, we have for $h=1,2$, and for every $k=1,\dots, d,$, we have
    \begin{equation}
    \label{eqn:i=1...d}
        \vert Q_{h}x_{k,0}-P_{h,k}-\Theta_k\vert<\max\left\{q^{-s},q^{t}q^{-\frac{s+t}{2}} \right\}=q^{\frac{t-s}{2}}.
    \end{equation}
    Now, for every $j=1,\dots,m$, we have 
    \begin{equation}
    \begin{split}
    \label{eqn:j=1...m}
        \left|Q_{h}f_j(\x_0)-\sum_{i=1}^d\partial_if_j(\x_0)(Q_{h}x_{i,0}-P_{h,i}-\Theta_i)-P_{h,d+j}-\Theta_{d+j}\right|
        \leq Mq^{-s}.
    \end{split}
    \end{equation}
    Let $Q=Q_2-Q_1$ and $\mathbf{P}=\mathbf{P}_2-\mathbf{P}_1=(P_1,\dots,P_n)$ so that $\vert Q\vert\leq q^t$. Hence, by subtracting \eqref{eqn:i=1...d} from each other with $h=1,2$, we have
    \begin{equation}
        \vert Qx_{i,0}-P_i\vert< q^{\frac{t-s}{2}} \,\,\,\, \text{for } i=1,\dots,d
    \end{equation} Similarly, by subtracting \eqref{eqn:j=1...m} from each other with $h=1,2$, we have
    \begin{equation}
    \begin{split}
        \left|Qf_j(\x_0)-\sum_{i=1}^d\partial_if_j(\x_0)(Qx_{i,0}-P_i)-P_{d+j}\right|\leq  M q^{-s}\quad \text{for } j=1,\dots,m.
    \end{split}
    \end{equation}
    Given $r>0$ and $k\in \N$, define $[r]^k=\{\mathbf{v}\in \K_{2(n+1)}^k:\Vert \mathbf{v}\Vert\leq r\}$.  
    Thus, we have
$$u_1(\x_0)\begin{pmatrix}
        -\mathbf{P}\sigma_n\\
        Q
    \end{pmatrix}\in \left(\left[Mq^{-s}\right]^m\times \left[q^{\frac{t-s}{2}}\right]^d\times [q^t]\right)\cap u_1(\x_0)\mathcal{R}^{n+1}.$$

Thus, 
    \begin{equation*}
    \label{eqn:g_s,tu_q(x_0)(-P,Q)}
    g_{s,t}u_1(\x_0)\begin{pmatrix}
        -\mathbf{P}\sigma_n\\
        Q
    \end{pmatrix}\in {\left( M\left[q^{\frac{(d+2)t-2ns}{2(n+1)}}\right]^{n+1}\right)} \cap g_{s,t}u_1(\x_0)\mathcal{R}^{n+1}.\end{equation*}
   
   Hence, one has $\lambda_1(g_{s,t}^{\star}u_1^{\star}(\x_0)\mathcal{R}^{n+1})=\lambda_1(g_{s,t}^*u_1^*(\x_0)\mathcal{R}^{n+1})\geq q^{-\frac{(d+2)t-2ns}{2(n+1)}}.$ Now, it follows from duality (Theorem \ref{thm:Duality}) that $\lambda_{n+1}(g_{s,t}u_1(\x_0)\mathcal{R}^{n+1})\ll_n q^{\frac{(d+2)t-2ns}{2(n+1)}}$.  Therefore, by applying Lemma \ref{lem:DiagImLattPointCnt} with $c=\frac{(d+2)t-2ns}{2(n+1)}
    $,  we have
  \[
\#g_{s,t}u_1(\x_0)\mathcal{R}^{n+1}\cap M\left[q^{\frac{(d+2)t-2ns}{2(n+1)}}\right]^{n+1}\ll \frac{q^{\frac{(d+2)t-2ns}{2}}}{q^{\frac{(d+2)t+2s}{2}-\frac{d+2}{2}(s+t)}}
=q^{\frac{d}{2}(t+s)+(t-ns)}.
\]\end{proof}
The following lemma, coupled with Lemma \ref{lem:GenericCNT}, now yields Proposition \ref{prop:counting generic}:
\begin{lemma}\label{lem:lemma counting1}
For all sufficiently large $t \in \N$, we have the following for any $s\in \N$: 
\[\mathcal{N}_{\boldsymbol{\Theta}}(B \setminus \mathfrak{M}(s, t); s, t) \leq q^{\frac{d(t+s)}{2}}\mathcal{L}_d(B) \max_{\x_0\in B\setminus \mathfrak{M}(s,t)}
\mathcal{N}_{\boldsymbol{\Theta}}\left(B\left(\x_0,q^{-\frac{s+t}{2}}\right)\cap B; s,t\right).\]
\end{lemma}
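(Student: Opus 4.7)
\textbf{Plan for Lemma \ref{lem:lemma counting1}.} The plan is to cover the generic part $B\setminus\mathfrak{M}(s,t)$ by balls of radius $q^{-(s+t)/2}$, extract a pairwise disjoint subcover via the ultrametric Besicovitch theorem, and then distribute the count $\mathcal{N}_{\boldsymbol{\Theta}}(B\setminus\mathfrak{M}(s,t);s,t)$ across these smaller balls, finally bounding the number of balls by a measure comparison.

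First I consider the cover $\{B(\x_0,q^{-(s+t)/2})\}_{\x_0\in B\setminus\mathfrak{M}(s,t)}$ of $B\setminus\mathfrak{M}(s,t)$, and apply \cref{thm:Covering} to extract a countable subcover by mutually disjoint balls $B_i:=B(\x_i,q^{-(s+t)/2})$, $i\in I$, with $\x_i\in B\setminus\mathfrak{M}(s,t)$. Once $t$ is large enough that $q^{-(s+t)/2}$ is smaller than the radius of $B$, the ultrametric property (any two balls are either disjoint or nested) forces each $B_i$ to be contained in $B$, since $\x_i\in B$.

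Next I unwind the definition of $\mathcal{R}_{\boldsymbol{\Theta}}$: any pair $(\mathbf{P},Q)\in\mathcal{R}_{\boldsymbol{\Theta}}(B\setminus\mathfrak{M}(s,t),s,t)$ is witnessed by some $\x\in B\setminus\mathfrak{M}(s,t)$ with $\|\mathbf{f}(\x)-(\mathbf{P}+\boldsymbol{\Theta})/Q\|<q^{-(s+t)}$, and this $\x$ belongs to some $B_i\cap B$. Therefore $\mathcal{R}_{\boldsymbol{\Theta}}(B\setminus\mathfrak{M}(s,t),s,t)\subseteq\bigcup_{i\in I}\mathcal{R}_{\boldsymbol{\Theta}}(B_i\cap B,s,t)$, whence
\[
\mathcal{N}_{\boldsymbol{\Theta}}(B\setminus\mathfrak{M}(s,t);s,t)\leq \sum_{i\in I}\mathcal{N}_{\boldsymbol{\Theta}}(B_i\cap B;s,t)\leq |I|\cdot\max_{\x_0\in B\setminus\mathfrak{M}(s,t)}\mathcal{N}_{\boldsymbol{\Theta}}(B(\x_0,q^{-(s+t)/2})\cap B;s,t).
\]

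Finally, to bound $|I|$ I invoke the fact that the $B_i$ are pairwise disjoint and contained in $B$, so $\sum_{i\in I}\mathcal{L}_d(B_i)\leq \mathcal{L}_d(B)$. Each $B_i$ has Lebesgue measure comparable to $q^{-d(s+t)/2}$ in the ultrametric, so $|I|\leq q^{d(s+t)/2}\mathcal{L}_d(B)$, and inserting this into the previous display yields the claim. I expect no serious obstacle: the argument is a standard Vitali-type reduction, substantially simplified by the ultrametric property of $\mathcal{K}^d$. The only delicate points are the exact measure of a ball of radius $q^{-(s+t)/2}$ when $s+t$ is odd, and the guarantee that $B_i\subseteq B$; the former produces at worst a harmless absolute constant, while the latter is automatic for $t$ sufficiently large.
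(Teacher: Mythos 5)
Your proposal is correct and follows essentially the same route as the paper: tile the relevant region by mutually disjoint ultrametric balls of radius $q^{-(s+t)/2}$ centered at points of $B\setminus\mathfrak{M}(s,t)$, distribute the rational-point count over the tiles, and bound the number of tiles by a measure comparison. The only cosmetic difference is that the paper starts from a tiling of all of $B$ and discards tiles meeting $\mathfrak{M}(s,t)$ (using that $\mathfrak{M}(s,t)$ is itself a union of such balls), whereas you cover $B\setminus\mathfrak{M}(s,t)$ directly and invoke \cref{thm:Covering} for disjointness (which, for balls of equal radius, is automatic from the ultrametric structure); both produce the same bound.
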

\begin{proof}

Choose $t\in \N$ large enough so that $q^{-\frac{t}{2}}$ is at most the radius of $B$. Then any open ball centred at a point in $B$ and having radius not exceeding $q^{-\frac{t}{2}}$ must be contained in $B $. From this, it follows that, when $t\gg1$, for all $s\in \N$, $B$ can be written as a disjoint union of open balls having radius $q^{-\frac{s+t}{2}}$, and clearly the number of such balls in the union is at most  $q^{\frac{d}{2}(s+t)}\mathcal{L}_d(B)$. If any of these balls intersects $\mathfrak{M}(s,t)$, then that ball coincides with $B\left(\x_0,q^{-\frac{s+t}{2}}\right)$ for some $\x_0\in \mathfrak{M}(s,t)$.   Therefore, 
$$\mathcal{N}_{\boldsymbol{\Theta}}(B\setminus \mathfrak{M}(s,t); s,t)\leq q^{\frac{d}{2}(t+s)}\mathcal{L}_d(B)\max_{\x_0\in B\setminus \mathfrak{M}(s,t)}\mathcal{N}_{\boldsymbol{\Theta}}\left(B\left(\x_0,q^{-\frac{s+t}{2}}\right)\cap B; s,t\right).$$
\end{proof}

\subsection{Analysis on the Special Part}
\label{subsec:Special}
To deal with the \emph{special part} of the manifold, for $r,s',t_1,\dots,t_n\in \mathbb{Z}$ and for any ball $B\subseteqq \K^d$, define
$$ \mathfrak{G}_{\mathbf{f}}(r,s',t_1,\dots,t_n)=\left\{\x\in B:\exists\,\, \mathbf{P}=(P_1,\dots,P_n)\in \mathcal{R}^n,Q\in \mathcal{R}\left|\begin{array}{lcl}
 \vert \mathbf{f}(\x)\cdot \mathbf{P}+Q\vert<q^{-r}\\[2ex]
 \Vert \nabla(\mathbf{f}(\x)\cdot \mathbf{P})\Vert<q^{s'} \\[2ex] 
 \vert P_i\vert<q^{t_i},\,\,\,i=1,\dots,n
\end{array} \right.\right\}.$$
We use the following theorem by Das and Ganguly \cite{das2022inhomogeneous}, which bounds the measure of the set $\mathfrak{G}_{\mathbf{f}}(r,s',t_1,\dots,t_n)$.
\begin{theorem}[{\cite[Theorem 6.1]{das2022inhomogeneous}}]\label{BKM Function Field}
Suppose $U$ is an open subset of $\K^d$ and $\mathbf{f}: U \rightarrow \K^n$ which satisfies \ref{I}-\ref{IV}. Then for any $\x_0 \in U,$ one can find a neighbourhood $V \subseteqq U$ of $\x_0$ and $\alpha >0$ with the following property: for any ball $B \subseteqq V,$ there exists $E>0$ such that for any choice of $r,s',t_1,\dots,t_n \in \mathbb Z$ with $r \geq 0,$  $t_1,\dots,t_n \geq 1,$ and  $s' + \sum_i t_i - r - \max_i t_i < 0$ one has 

\begin{equation*}\label{equ:small grad}
 \mathcal{L}_d\left(\mathfrak{G}_{\mathbf{f}}(r,s',t_1,\dots,t_n)\right) \leq E \gamma^{\alpha}\mathcal{L}_d(B), \text{ where  }\gamma \df \max \left( q^{-r}, q^{\frac{s' + \sum_i t_i - r - \max_i t_i}{n+1}} \right).
\end{equation*} 
\end{theorem}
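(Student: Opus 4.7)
The plan is to follow the Kleinbock--Margulis quantitative non-divergence framework, transposed to the positive-characteristic setting as developed by Ghosh \cite{Ghosh2007} and extended by Ganguly--Ghosh \cite{GGcontemp}. The first move is to recast $\mathfrak{G}_{\mathbf{f}}(r, s', t_1, \ldots, t_n)$ dynamically. Associate to $\mathbf{x} \in U$ a unipotent matrix $u_{\mathbf{f}}(\mathbf{x})$ (essentially the matrix $u_1(\mathbf{x})$ of \S\ref{subsec:Generic}, whose entries encode $\mathbf{f}(\mathbf{x})$ together with its first-order partial derivatives $\partial_i \mathbf{f}(\mathbf{x})$), and a diagonal matrix $g$ whose entries are powers of $T$ chosen so that the three inequalities defining $\mathfrak{G}_{\mathbf{f}}$ become the single condition
\begin{equation*}
g\, u_{\mathbf{f}}(\mathbf{x})\, \mathcal{R}^{n+1} \text{ contains a nonzero vector of norm } \leq \gamma.
\end{equation*}
Concretely, the first inequality scales by $q^{r}$, the gradient inequality by $q^{-s'}$, and each coefficient bound by $q^{-t_i}$, yielding a common threshold $\gamma$. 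The hypothesis $s' + \sum_i t_i - r - \max_i t_i < 0$ translates into a bound on $|\det g|$ placing us precisely in the regime where quantitative non-divergence yields non-trivial information.

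With this dictionary in place, I would invoke the function-field analogue of the Kleinbock--Margulis non-divergence theorem. The theorem requires two structural inputs. First, nonplanarity of the tuple $1, f_1, \ldots, f_m$ on every open subset of $U$, which is exactly assumption \ref{II}. Second, the $(C,\alpha)$-good property on a neighborhood $V$ of $\mathbf{x}_0$: for every ball $B' \subseteq V$, every $\mathcal{K}$-linear combination $h$ of $1, f_1, \ldots, f_m$, and every $\epsilon > 0$,
\begin{equation*}
\mathcal{L}_d\bigl(\{\mathbf{x} \in B' : |h(\mathbf{x})| < \epsilon \sup_{B'}|h|\}\bigr) \leq C \epsilon^{\alpha} \mathcal{L}_d(B').
\end{equation*}
This is derived from analyticity: locally each $f_j$ is a convergent power series, and sublevel sets are controlled by those of a truncating polynomial of fixed degree, the $(C,\alpha)$-goodness of which follows from an elementary induction on degree using only the ultrametric property of $|\cdot|$. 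Fixing the truncation degree on a sufficiently small $V$ yields uniform constants $C,\alpha$.

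The heart of the argument, and the main technical obstacle, is the non-divergence induction itself. One runs an induction on the rank of primitive $\mathcal{R}$-submodules $\Lambda'$ of $u_{\mathbf{f}}(\mathbf{x})\mathcal{R}^{n+1}$: at each stage the $(C,\alpha)$-good property bounds the measure of the set of $\mathbf{x} \in B$ for which the covolume of $g\Lambda'$ drops below the corresponding threshold by $\gamma^{\alpha}\mathcal{L}_d(B)$, while nonplanarity guarantees that the induction cannot stabilize on a proper $\mathcal{K}$-rational subspace (otherwise one would extract a nontrivial $\mathcal{K}$-linear relation among $1, f_1, \ldots, f_m$ on an open set). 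Summing the resulting contributions over primitive flags yields the desired bound $\mathcal{L}_d(\mathfrak{G}_{\mathbf{f}}) \leq E \gamma^{\alpha} \mathcal{L}_d(B)$. The subtlety specific to positive characteristic lies in the ultrametric accounting inside the inductive step and in verifying that the absence of an archimedean mean value theorem does not obstruct the covolume estimates; both points are handled in the detailed proof of \cite[Theorem~6.1]{das2022inhomogeneous}, which we cite as a black box.
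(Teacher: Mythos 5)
The paper does not prove this statement at all; it is quoted verbatim as \cite[Theorem 6.1]{das2022inhomogeneous} and used as a black box, which is precisely what you do in your final sentence. Your preliminary sketch of what lies inside that black box (the dynamical reformulation via a diagonal flow $g$ and the unipotent $u_{\mathbf{f}}(\mathbf{x})$, the role of nonplanarity \ref{II}, the $(C,\alpha)$-good property of analytic functions over $\K$, and the Kleinbock--Margulis induction over primitive submodules adapted to positive characteristic following \cite{Ghosh2007,GGcontemp}) accurately reflects the strategy of the cited proof, so the two treatments coincide.
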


\begin{lemma}
\label{lem:specCont}
Let $\mathbf{f}$ and $U$ be as in Proposition \ref{prop:Key}. Then  there exists a constant $\widetilde{C}\in \mathbb N$, possibly depending on $M, d$ and $n$,  such that for all $t\in \N$,  we have
\begin{equation*}
    \begin{split}
        \mathfrak{M}(s,t)\subseteqq\mathfrak{G}_{\mathbf{f}}\left(\widetilde{C}t,\widetilde{C}\lceil(s-t)/2\rceil,\widetilde{C}s,\dots,\widetilde{C}s\right).
    \end{split}
    \end{equation*}
\end{lemma}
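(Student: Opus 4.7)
The plan is to take an arbitrary $\mathbf{x}\in\mathfrak{M}(s,t)$, choose $\mathbf{x}_0\in\mathfrak{M}_0(s,t)$ with $\mathbf{x}\in B(\mathbf{x}_0, q^{-(s+t)/2})$, and produce a witness $(\mathbf{P},Q_0)\in\mathcal{R}^n\times\mathcal{R}$ placing $\mathbf{x}$ in $\mathfrak{G}_{\mathbf{f}}(\widetilde{C}t,\widetilde{C}(s-t)/2,\widetilde{C}s,\dots,\widetilde{C}s)$ for $t$ large enough. All the data will come from a short lattice vector for $g_{s,t}^{*}u_1^{*}(\mathbf{x}_0)\mathcal{R}^{n+1}$, after which we transport everything to $\mathbf{x}$ by a Taylor estimate.

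The first step is to unpack the smallness condition defining $\mathfrak{M}_0(s,t)$. By definition there is a nonzero $\mathbf{v}\in\mathcal{R}^{n+1}$, which I split into blocks $\mathbf{v}=(\mathbf{p},\mathbf{q},p_0)^{t}$ with $\mathbf{p}\in\mathcal{R}^m$, $\mathbf{q}\in\mathcal{R}^d$, $p_0\in\mathcal{R}$, satisfying $\Vert g_{s,t}^{*}u_1^{*}(\mathbf{x}_0)\mathbf{v}\Vert<q^{-\frac{(d+2)t-2ns}{2(n+1)}}$. Computing $u_1^{-1}(\mathbf{x})$ directly from the block-upper-triangular form of $u_1(\mathbf{x})=z(\mathbf{x})u(\mathbf{x})$, one obtains
\begin{equation*}
u_1^{*}(\mathbf{x})=(u_1(\mathbf{x})^{t})^{-1}=\begin{pmatrix} \mathrm{I}_m & 0 & 0 \\ A(\mathbf{x})^{t} & \mathrm{I}_d & 0 \\ -\bm{f}(\mathbf{x})\sigma_m & -\mathbf{x}\sigma_d & 1 \end{pmatrix},\qquad A(\mathbf{x})=\sigma_m J(\mathbf{x})\sigma_d.
\end{equation*}
Applying $u_1^{*}(\mathbf{x}_0)$ to $\mathbf{v}$ and matching the three block-components against the diagonal entries of $g_{s,t}^{*}$ (using that $\eta-\frac{(d+2)t-2ns}{2(n+1)}=s$ with $\eta=\frac{(d+2)t+2s}{2(n+1)}$) gives the three scalar bounds
\begin{equation*}
\Vert\mathbf{p}\Vert<q^s,\qquad \Vert A(\mathbf{x}_0)^{t}\mathbf{p}+\mathbf{q}\Vert<q^{(s-t)/2},\qquad \bigl|{-\bm{f}(\mathbf{x}_0)\sigma_m\mathbf{p}-\mathbf{x}_0\sigma_d\mathbf{q}+p_0}\bigr|<q^{-t}.
\end{equation*}

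The next step is to recognise these as conditions on a candidate $(\mathbf{P},Q_0)$. Define $\mathbf{P}^{(1)}:=\sigma_d\mathbf{q}\in\mathcal{R}^d$, $\mathbf{P}^{(2)}:=\sigma_m\mathbf{p}\in\mathcal{R}^m$, $\mathbf{P}:=(\mathbf{P}^{(1)},\mathbf{P}^{(2)})\in\mathcal{R}^n$, and $Q_0:=-p_0$. The involutivity of $\sigma_d,\sigma_m$, together with the identity $A(\mathbf{x})^{t}\mathbf{p}+\mathbf{q}=\sigma_d\bigl(J(\mathbf{x})^{t}\mathbf{P}^{(2)}+\mathbf{P}^{(1)}\bigr)$ and the direct computation $\bm{f}(\mathbf{x}_0)\sigma_m\mathbf{p}+\mathbf{x}_0\sigma_d\mathbf{q}=\mathbf{f}(\mathbf{x}_0)\cdot\mathbf{P}$, converts the three bounds above into
\begin{equation*}
\Vert\mathbf{P}^{(2)}\Vert<q^s,\qquad \Vert\nabla(\mathbf{f}(\mathbf{x}_0)\cdot\mathbf{P})\Vert<q^{(s-t)/2},\qquad |\mathbf{f}(\mathbf{x}_0)\cdot\mathbf{P}+Q_0|<q^{-t}.
\end{equation*}
The first two inequalities together with $\Vert\nabla\bm{f}\Vert\le 1$ from \ref{IV} force $\Vert\mathbf{P}^{(1)}\Vert\le q^s$ for all sufficiently large $t$, whence $\max_i|P_i|\le q^s$.

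The last step is to transfer the first and third inequalities from $\mathbf{x}_0$ to $\mathbf{x}$. I would apply the second-order Taylor expansion (Lemma~\ref{lem:Taylor}) to $\mathbf{f}(\cdot)\cdot\mathbf{P}$ at $\mathbf{x}_0$ with displacement $\mathbf{x}-\mathbf{x}_0$, noting that only the $\bm{f}$-coordinates of $\mathbf{f}$ produce nontrivial second-order difference quotients. The linear correction is bounded by $\Vert\mathbf{x}-\mathbf{x}_0\Vert\cdot\Vert\nabla(\mathbf{f}(\mathbf{x}_0)\cdot\mathbf{P})\Vert\le q^{-(s+t)/2}\cdot q^{(s-t)/2}=q^{-t}$, while the quadratic remainder is bounded by $M\Vert\mathbf{P}^{(2)}\Vert\cdot q^{-(s+t)}\le Mq^{-t}$, so $|\mathbf{f}(\mathbf{x})\cdot\mathbf{P}+Q_0|\le Mq^{-t}$. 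For the gradient, $\nabla(\mathbf{f}(\mathbf{x})\cdot\mathbf{P})-\nabla(\mathbf{f}(\mathbf{x}_0)\cdot\mathbf{P})=(J(\mathbf{x})-J(\mathbf{x}_0))^{t}\mathbf{P}^{(2)}$, and the bound on second-order difference quotients in \ref{IV} yields $|\partial_i f_j(\mathbf{x})-\partial_i f_j(\mathbf{x}_0)|\le M\Vert\mathbf{x}-\mathbf{x}_0\Vert$, whence $\Vert\nabla(\mathbf{f}(\mathbf{x})\cdot\mathbf{P})\Vert\le Mq^{(s-t)/2}$. Choosing $\widetilde{C}\in\mathbb{N}$ appropriately to absorb the multiplicative constant $M$ into the exponents $\widetilde{C}t$, $\widetilde{C}(s-t)/2$, $\widetilde{C}s$ for all sufficiently large $t$ places $\mathbf{x}$ in $\mathfrak{G}_{\mathbf{f}}(\widetilde{C}t,\widetilde{C}(s-t)/2,\widetilde{C}s,\dots,\widetilde{C}s)$, as required.

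The main obstacle I expect is the bookkeeping of the permutation matrices $\sigma_m,\sigma_d,\sigma_{n+1}$ implicit in $u_1^{*}$, and correctly pairing the block components of $\mathbf{v}$ with the coordinate blocks $(\mathbf{P}^{(1)},\mathbf{P}^{(2)},Q_0)$ so that the bottom coordinate really reads as $-\mathbf{f}(\mathbf{x}_0)\cdot\mathbf{P}+p_0$ and the middle block really is the gradient $\nabla(\mathbf{f}(\mathbf{x}_0)\cdot\mathbf{P})$; once this identification is in place and the ``linear'' coordinates of $\mathbf{f}$ are correctly folded into $\mathbf{P}^{(1)}$, the Taylor estimate is routine.
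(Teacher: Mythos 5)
Your proposal is correct and follows essentially the same route as the paper's own proof: extract a short vector from the definition of $\mathfrak{M}_0(s,t)$, unpack the three resulting coordinate inequalities, bound the remaining coordinates of $\mathbf{P}$ via the gradient inequality and \ref{IV}, and transfer from $\mathbf{x}_0$ to $\mathbf{x}$ by Lemma~\ref{lem:Taylor}. The only cosmetic difference is that you work with $g_{s,t}^{*}u_1^{*}(\mathbf{x}_0)$ directly, which forces you to carry the $\sigma_m,\sigma_d$ coordinate reversals explicitly in $u_1^{*}$ and then undo them via $\mathbf{P}^{(1)}:=\sigma_d\mathbf{q}$, $\mathbf{P}^{(2)}:=\sigma_m\mathbf{p}$; the paper instead works with the conjugated dual $g_{s,t}^{\star}u_1^{\star}(\mathbf{x}_0)$, for which the permutations are already built in and $u_1^{\star}$ is upper block-triangular in the natural block order $(Q_0,\mathbf{P}^{(d)},\mathbf{P}^{(m)})$, so the three inequalities in \eqref{eqn:aConds} read off immediately without any $\sigma$-bookkeeping. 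Both versions are equivalent since $\lambda_i(g^{\star}\mathcal{R}^{n+1})=\lambda_i(g^{*}\mathcal{R}^{n+1})$, as the paper notes. Your final step of absorbing the $\max\{1,M\}$ factors into the constant $\widetilde{C}$ is phrased with the same looseness as the paper's (strictly speaking one should shift the parameters $r,s',t_i$ by an additive constant rather than multiply by $\widetilde{C}$, since increasing $r$ tightens the condition $|\mathbf{f}(\mathbf{x})\cdot\mathbf{P}+Q_0|<q^{-r}$), but this does not affect the subsequent application of Theorem~\ref{BKM Function Field}.
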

\begin{proof}
Let $\x_0\in\mathfrak{M}_0(s,t)$. Then,
\begin{equation}\label{special1}\lambda_1(g_{s,t}^{\star}u_1^{\star}(\x_0)\mathcal{R}^{n+1})<q^{-\frac{(d+2)t-2ns}{2(n+1)}}.
\end{equation}
Note that
    $$g_{s,t}^{\star}=\operatorname{diag}\left(T^{s+t-\frac{(d+2)t+2s}{2(n+1)}},\underbrace{T^{\frac{s+t}{2}-\frac{(d+2)t+2s}{2(n+1)}},\dots,T^{\frac{s+t}{2}-\frac{(d+2)t+2s}{2(n+1)}}}_d,\overbrace{T^{-\frac{(d+2)t+2s}{2(n+1)}},\dots,T^{-\frac{(d+2)t+2s}{2(n+1)}}}^m\right)$$
   and $u_1^{\star}(\x_0)=\begin{pmatrix}
        1&-\x_0&-\bm{f}(\x_0)\\
        &\mathrm{I}_d&J(\x_0)\\
        &&\mathrm{I}_m
    \end{pmatrix}$. Then, by \eqref{special1}, there exists some $(Q_0,\mathbf{P})\df(Q_0,\mathbf{P}^{(d)},\mathbf{P}^{(m)})=\left(Q_0,P_{1}^{(d)},\dots,P_{d}^{(d)},P_{1}^{(m)},\dots,P_{m}^{(m)}\right)\in \mathcal{R}^{n+1}\setminus\{0\}$, such that
    \begin{equation}\label{eqn:aConds}
       \begin{array}{lcl}
  \vert Q_0+\mathbf{P}\cdot \mathbf{f}(\x_0)\vert<q^{-t},\\[2ex]
  \left|{P}^{(d)}_i+\sum_{j=1}^m\partial_if_j(\x_0){P}^{(m)}_j\right|<q^{\frac{s-t}{2}},\,\text{    for  }i=1,\dots,d, \\[2ex] 
  \vert {P}^{(m)}_j\vert<q^s,\,\text{    for  }j=1,\dots,m.
\end{array} 
    \end{equation}  
    By the second and third inequalities in \eqref{eqn:aConds}, for every $i=1,\dots,d$,
    \begin{equation}
    \label{eqn:a_1...a_nPart}
        \left|{P}^{(d)}_i\right|\leq \max\left\{\left|{P}^{(d)}_i+\sum_{j=1}^m\partial_if_j(\x_0){P}^{(m)}_j\right|,\max_{j=1,\dots,m}\left|\partial_if_j(\x_0){P}^{(m)}_j\right|\right\}\leq M
        q^{s}.
    \end{equation}
    If $\x\in \mathfrak{M}(s,t)$, then there exists $\x_0\in \mathfrak{M}_0(s,t)$, such that $\Vert \x-\x_0\Vert<q^{-\frac{s+t}{2}}$. Considering the Taylor expansion about $\x_0$,  it follows from Lemma \ref{lem:Taylor}, \ref{IV}, \eqref{eqn:aConds}, and  \eqref{eqn:a_1...a_nPart} that
  
\begin{equation}\label{eqn:f_P_bound}
|Q_0 + \mathbf{P}\cdot \mathbf{f}(\x)|
\le \max \left\{q^{-t}, Mq^{-\frac{s+t}{2}}q^{\frac{s-t}{2}}, Mq^s\, \left(q^{-(\frac{s+t}{2})}\right)^2\right\}<\max\{1,M\}q^{-t}.
\end{equation}
    Again using \eqref{eqn:a_1...a_nPart} and the first-order Taylor expansion of $\partial_if_j$ about $\mathbf{x}_0$ \eqref{eqn:a_1...a_nPart}, we get
\begin{align}\label{eqn:gradient_bound}
\left|{P}_i^{(d)}+\sum_{j=1}^m\partial_if_j(\x){P}_j^{(m)}\right|
&\leq
\max\left\{
\left|{P}_i^{(d)}+\sum_{j=1}^m\partial_if_j(\x_0){P}_j^{(m)}\right|,
\max_{j=1,\dots,m}\left|\partial_if_j(\x)-\partial_if_j(\x_0)\right|
\cdot \left|{P}_j^{(m)}\right|
\right\}\nonumber \\
&\leq
\max\left\{q^{\frac{s-t}{2}},Mq^{-\frac{s+t}{2}}q^s\right\} \leq
\max\{1,M\}q^{\frac{s-t}{2}}.
\end{align}
Finally, combining \eqref{eqn:f_P_bound}, \eqref{eqn:gradient_bound}, \eqref{eqn:aConds}, and \eqref{eqn:a_1...a_nPart}, there exists a constant $\widetilde{C} \in \mathbb N$ that depends only on $M$, $d$, and $n$, such that the set $\mathfrak{G}_{\mathbf{f}}\left(\widetilde{C}t,\widetilde{C}\lceil(s-t)/2\rceil,\widetilde{C}s,\dots,\widetilde{C}s\right)$ contains $\mathfrak{M}(s,t)$.
 
\end{proof}
\begin{lemma}
\label{lem:M^newBND}
    Let $U\subseteqq \K^d$ be an open set and $\mathbf{f}:U\rightarrow \K^n$ be of the form $\mathbf{f}(\x)=(\x,\bm{f}(\x))$ such that it satisfies the hypothesis of Proposition \ref{prop:Key}. Then for every $\x_0\in U$ there exists a ball $B$ centered at $\x_0$ and constants $C,\alpha>0$ such that
    $$\mathcal{L}_d(\mathfrak{M}(s,t)\cap B)\leq Cq^{\frac{\alpha}{n+1}
    \left(ns-\frac{3t}{2}\right)}\mathcal{L}_d(B),\text{ whenever $t\in \N$ is large enough and }s\leq \frac{\kappa t}{n}.$$
\end{lemma}
\begin{proof}
Note that, using Lemma \ref{lem:specCont},  in the context of Theorem \ref{BKM Function Field}, up to a constant $\widetilde{C}$, we have $$r=t, s'=\lceil{(s-t)/2}\rceil,t_1=\cdots=t_n =s.$$ For $t\gg 1$, clearly $s'+\sum_{i}t_i-r-\max_{i}t_i<(n-1)s-t+\frac{s-t}{2}+1<1+ns-\frac{3t}{2}\leq 1 -\left( \kappa -\frac{3}{2}\right)t<0$, holds for all $s\in \N$ such that $s\leq \frac{\kappa t}{n}$.
 
By Theorem \ref{BKM Function Field} and Lemma \ref{lem:specCont}, for every $\x_0\in U$ there exists a ball $B$ centered at $\x_0$ and constants $\hat{C},\alpha>0$  such that 
\begin{equation*}
\begin{split}
\label{eqn:M^newMeasure}
    \mathcal{L}_d\left(\mathfrak{M}(s,t)\cap B\right)\leq \hat{C} \max\left\{q^{-t},q^{\left(\frac{2n-1}{2(n+1)}s-\frac{3t}{2(n+1)}+1\right)}\right\}^{\alpha}\mathcal{L}_d(B) &= \hat{C} q^{\alpha\left(\frac{2n-1}{2(n+1)}s-\frac{3t}{2(n+1)}+1\right)}\mathcal{L}_d(B)
 \\& \leq \left(\hat{C}q^{\alpha}\right)q^{\frac{\alpha}{n+1}
    \left(ns-\frac{3t}{2}\right)}\mathcal{L}_d(B).
\end{split}
\end{equation*}
\end{proof}
By combining Theorem \ref{thm:Covering}, Lemma \ref{lem:GenericCNT}, and Lemma \ref{lem:M^newBND} we obtain Proposition \ref{prop:Key}. We now proceed to prove the main theorems.
\section{Proofs of Theorem \ref{thm:main} and Theorem \ref{thm:DimConv}}\label{sec: proof of main theorem}
\subsection{Proof of Theorem \ref{thm:main}} To begin with, we remind the reader that the approximating function $\psi$ has the decay rate specified in \eqref{lower bound approx}.  It is enough to show that for any $ \x_0 \in U $ and any small ball $ B_0 \ni \x_0$, one has 
\[
\mathcal{L}_d\left(\left\{ \x \in B_0 : \mathbf{f}(\x) \in \mathcal{S}_n^{\boldsymbol{\Theta}}(\psi) \right\}\right) = 0.
\]
We divide the manifold into special and generic parts, i.e., $\mathfrak{M}(-\log_q \psi(q^{t}), t)$ and  $B_0 \setminus \mathfrak{M}(-\log_q \psi(q^{t}), t)$. To this end, we first observe that if $\mathbf{f}(\x) \in \mathcal{S}_n^{\boldsymbol{\Theta}}(\psi)$, then there exist infinitely many $t \in \N$ such that
\[
\left\| \mathbf{f}(\x) - \frac{\p+\boldsymbol{\Theta}}{Q} \right\| \leq \frac{\psi(q^{t})}{q^{t}} \quad \text{with} \,\, (\mathbf{P},Q) \in \mathcal{R}^{n}\times(\mathcal{R}\setminus\{0\})\,\, \text{and}\,\, |Q| = q^t.
\]Hence, for every $N \in \N$, we have
\begin{equation}
\begin{split}
\label{eqn:S_nContained}
    \left\{\x\in B_0:\mathbf{f}(\x)\in \mathcal{S}_n^{\boldsymbol{\Theta}}(\psi)\right\}\subseteqq\bigcup_{t\geq N}\underbrace{\mathfrak{M}(-\log_q \psi(q^{t}),t)\cap B_0)}_{A_t^{\boldsymbol{\Theta}}}\bigcup\\
    \bigcup_{t\geq N}\underbrace{\bigcup_{\tiny(\mathbf{P},Q)\in \mathcal{R}_{\boldsymbol{\Theta}}(B_0\setminus \mathfrak{M}(-\log_q \psi(q^{t}),t))}\left\{\x\in B_0:\left\Vert \x-\frac{\pi(\mathbf{P}+\boldsymbol{\Theta})}{Q}\right\Vert\leq \frac{\psi(q^t)}{q^t}\right\}}_{B_t^{\boldsymbol{\Theta}}},
\end{split}
\end{equation}
where $\pi: \K^n\to\K^d,$ is the projection into the first $d$ coordinates.  We invoke Proposition \ref{prop:counting generic} to obtain
 \begin{equation}
\begin{split}
\label{eqn:B_tMeasBnd}
    \mathcal{L}_d(B_t^{\boldsymbol{\Theta}})\leq q^{t(d+1)}\psi(q^t)^m\frac{\psi(q^t)^d}{q^{td}}=q^t\psi(q^t)^n,~\forall t\gg1.
\end{split}
\end{equation}
Lemma \ref{lem:M^newBND} will be applied to estimate the measure of $A_t^{\boldsymbol{\Theta}}$. 
Since \eqref{lower bound approx} is equivalent to \[-\log_q\psi(q^t)\leq \frac{\kappa t}{n}, \text{ for all }t\in \N,\] Lemma \ref{lem:M^newBND} yields an $\alpha>0$ such that, for all sufficiently large $t\in \N$, 
\begin{equation}
\label{eqn:A_tMeaBND}
\begin{split}
\mathcal{L}_d\left(A_t^{\boldsymbol{\Theta}}\right)\leq  Cq^{\frac{\alpha}{n+1}
    \left(-n\log_q\psi(q^t)-\frac{3t}{2}\right)}\mathcal{L}_d(B)
\leq C q^{\frac{\alpha }{(n+1)}\left(\kappa -\frac{3}{2}\right)t}\mathcal{L}_d(B_0). 
\end{split}
\end{equation}

From  \eqref{eqn:B_tMeasBnd}, \eqref{eqn:A_tMeaBND} and \ref{rem-conv sum}, we conclude  that
$$\sum_{t\geq N}\mathcal{L}_d(B_t^{\boldsymbol{\Theta}})+\sum_{t\geq N}\mathcal{L}_d\left(A_t^{\boldsymbol{\Theta}}\right)<\infty, \text{ when \(N\) is large enough}.$$ 
$\mathcal{L}_d(\mathbf{f}^{-1} (\mathcal{S}_n^{\boldsymbol{\Theta}}(\psi)))=0$ is now immediate from this in view of the Borel-Cantelli lemma. 
\subsection{Proof of Theorem \ref{thm:DimConv}}
\label{subsec:Dimension}
Take $s=-\log_q\psi(q^t)>0$. By Proposition \ref{prop:Key} and Theorem \ref{thm:Covering}, one can cover $A_t^{\boldsymbol{\Theta}}=\mathfrak{M}(s,t)\cap B_0$ with disjoint balls of radius $q^{-\frac{s+t}{2}}$. When  $t\in \N$ is large enough, one can cover it with at most $ q^{\frac{d}{2}(s+t)}\mathcal{L}_d(A_t^{\boldsymbol{\Theta}})$ such balls. Thus, by Proposition \ref{prop:Key}, there exists $N_1 \in \N$ such that for every $t\geq N_1$, we have 
\begin{equation}
    \mathcal{H}^{\sigma}(A_t^{\boldsymbol{\Theta}})\ll q^{-\frac{s+t}{2}(\sigma-d)}\mathcal{L}_d(A_t^{\boldsymbol{\Theta}})\ll q^{-\frac{s+t}{2}(\sigma-d)}q^{{\frac{\alpha}{n+1}}(ns-\frac{3t}{2})}.
\end{equation}
\noindent Hence, by plugging in $s=-\log_q\psi(q^t)$, one obtains from \eqref{eq:hausdim} and \ref{rem-conv sum} that
\begin{equation}\label{N_1}
    \sum_{t\geq N_1}\mathcal{H}^{\sigma}(A_t^{\boldsymbol{\Theta}})\ll \sum_{t\geq N_1}\left(\frac{\psi(q^t)}{q^t}\right)^{\frac{\sigma-d}{2}}\left(\psi(q^t)^nq^{\frac{3t}{2}}\right)^{-\frac{\alpha}{n+1}}<\infty. 
\end{equation}
For the set $ B_{t}^{\boldsymbol{\Theta}} $, we employ the counting estimate~\eqref{eqn:GenCnt} along with a covering by balls of radius $ r = q^{-t}\psi(q^{t}) $. Hence, there exists $N_2\in \N$ such that
\begin{equation}\label{N_2}
   \sum_{t\geq N_2}\mathcal{H}^{\sigma}(B_t^{\boldsymbol{\Theta}})\ll \sum_{t\geq N_2}q^{(d+1)t}q^{-ms}r^{\sigma}
    =\sum_{t\geq N_2}\left({\frac{\psi(q^t)}{q^t}}\right)^{\sigma+m}q^{(n+1)t}<\infty,   
\end{equation}
owing to \eqref{eq:hausdim} and \ref{rem-conv sum}. It follows at once from \eqref{eqn:S_nContained}, \eqref{N_1} and \eqref{N_2} that $\mathcal{H}^{\sigma}(\mathbf{f}^{-1}(\mathcal{S}_n^{ \boldsymbol{\Theta}}(\psi)))=0$, due to the Borel-Cantelli lemma. 
\section{Appendix - A Result in Geometry of Numbers for Discrete Subgroups}\label{sec:minima}
We consider an extension of the function field $\K$ obtained by adjoining the power $T^{\frac{1}{\ell}}$. As mentioned earlier, we denote this extension by $\K_{\ell}$. Similarly, we denote the lattice $\F_q[T^{1/\ell}]\subseteqq \K_{\ell}$ as $\widetilde{\mathcal{R}} \df\mathbb{F}_q[T^{\frac{1}{\ell}}]$. We now prove the following proposition, which establishes a relationship between the first minimum of the discrete subgroup $gu \mathcal{R}^{m+n}\subseteqq\K_{\ell}^{m+n}$ and that of the lattice $gu \widetilde{\mathcal{R}}^{m+n}\subseteqq\K_{\ell}^{m+n}$, where $g$ is some diagonal matrix in $\operatorname{SL}(m+n,\K_{\ell})$ and $u$ is some unipotent matrix in $\operatorname{SL}(m+n,\K)$.
\begin{proposition}\label{prop:minima comparison}
 Let $g\in \operatorname{SL}\left(m+n,\K_{\ell}\right)$ be a diagonal matrix, $\boldsymbol{\alpha}\in M_{m\times n}(\K)$ and let
    $$u=\begin{pmatrix}
        \mathrm{I}_m&\boldsymbol{\alpha}\\
        0&I_n
    \end{pmatrix}\in\operatorname{SL}\left(m+n,\K\right).$$
    Then, $\lambda_1(gu\widetilde{\mathcal{R}}^{m+n})=\lambda_1(gu\mathcal{R}^{m+n})$.
\end{proposition}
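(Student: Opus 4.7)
The inequality $\lambda_1(gu\widetilde{\mathcal{R}}^{m+n}) \leq \lambda_1(gu\mathcal{R}^{m+n})$ is immediate from the inclusion $\mathcal{R}^{m+n} \subseteq \widetilde{\mathcal{R}}^{m+n}$, so the content lies entirely in the reverse direction. My plan is to exploit the $\mathcal{R}$-module decomposition $\widetilde{\mathcal{R}} = \bigoplus_{j=0}^{\ell-1} T^{j/\ell}\mathcal{R}$ together with its ambient counterpart $\mathcal{K}_{\ell} = \bigoplus_{j=0}^{\ell-1} T^{j/\ell}\mathcal{K}$. The crucial preliminary observation is that this decomposition is \emph{orthogonal} with respect to the absolute value: for $x = \sum_{j=0}^{\ell-1} T^{j/\ell} x_j \in \mathcal{K}_{\ell}$ with $x_j \in \mathcal{K}$, one has
\[
|x| = \max_{0 \leq j \leq \ell-1} q^{j/\ell}|x_j|.
\]
The reason is that $|T^{j/\ell} x_j|$ lies in the coset $q^{j/\ell} q^{\mathbb{Z}}$, and these cosets of $q^{\mathbb{Z}}$ inside $q^{\mathbb{Z}/\ell}$ are pairwise disjoint for $j = 0, \dots, \ell-1$, so the non-archimedean strong triangle inequality yields a uniquely dominant term.

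Given $v \in \widetilde{\mathcal{R}}^{m+n}$, I would write $v = \sum_{j=0}^{\ell-1} T^{j/\ell} v^{(j)}$ with $v^{(j)} \in \mathcal{R}^{m+n}$. Because $u$ has $\mathcal{K}$-entries and $T^{j/\ell}$ is a scalar, $uv = \sum_{j} T^{j/\ell}(uv^{(j)})$ with each $uv^{(j)} \in \mathcal{K}^{m+n}$; and because $g$ is diagonal it acts coordinate-wise, preserving this decomposition in each coordinate. Applying the orthogonality lemma coordinate by coordinate to $(guv)_k = \gamma_k (uv)_k$ and then taking the maximum over $k$, I obtain the key identity
\[
\|guv\| = \max_{0 \leq j \leq \ell-1} q^{j/\ell}\,\|guv^{(j)}\|.
\]

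To finish, I would select a nonzero $v \in \widetilde{\mathcal{R}}^{m+n}$ realizing $\lambda_1(gu\widetilde{\mathcal{R}}^{m+n})$ (which exists since $gu\widetilde{\mathcal{R}}^{m+n}$ is a lattice in $\mathcal{K}_{\ell}^{m+n}$, hence discrete with attained minimum) and pick any $j_0$ with $v^{(j_0)} \neq 0$. The identity above then forces
\[
\|guv^{(j_0)}\| \leq q^{-j_0/\ell}\|guv\| \leq \|guv\| = \lambda_1(gu\widetilde{\mathcal{R}}^{m+n}),
\]
while $v^{(j_0)} \in \mathcal{R}^{m+n}\setminus\{0\}$ gives $\lambda_1(gu\mathcal{R}^{m+n}) \leq \|guv^{(j_0)}\|$, closing the loop. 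The only substantive step is the orthogonality lemma for $\mathcal{K}_{\ell} = \bigoplus_j T^{j/\ell}\mathcal{K}$; once this is established, the diagonality of $g$ and the $\mathcal{K}$-rationality of the unipotent $u$ (as opposed to $\mathcal{K}_{\ell}$-rationality) ensure that the decomposition survives the action of $gu$ in every coordinate, so there is no real obstacle beyond this preliminary.
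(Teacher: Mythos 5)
Your proof is correct and follows essentially the same strategy as the paper: decompose $\widetilde{\mathcal{R}} = \bigoplus_{j=0}^{\ell-1} T^{j/\ell}\mathcal{R}$, exploit the non-archimedean ``orthogonality'' of the pieces, and extract a nonzero $\mathcal{R}^{m+n}$-component whose $gu$-norm is no larger. One phrase deserves a small repair: since the diagonal entries $\gamma_k$ of $g$ live in $\mathcal{K}_{\ell}$ rather than $\mathcal{K}$, the claim that $g$ ``preserves the decomposition in each coordinate'' is not literally true --- $\gamma_k(uv^{(j)})_k$ need not lie in $\mathcal{K}$, so it is not in general the $T^{j/\ell}$-component of $(guv)_k$. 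The remedy is simply to apply your orthogonality lemma to $(uv)_k$ (whose components do lie in $\mathcal{K}$), giving $|(uv)_k| = \max_j q^{j/\ell}|(uv^{(j)})_k|$, and then multiply through by $|\gamma_k|$; since scaling by a fixed scalar commutes with taking the maximum, the key identity $\|guv\| = \max_j q^{j/\ell}\|guv^{(j)}\|$ follows and the rest of your argument goes through unchanged. (The paper handles this same point by factoring $g$ as a pure-power-of-$T^{1/\ell}$ diagonal times a unit-norm diagonal and discarding the latter before invoking the coset argument.)
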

\begin{proof}
 Let $(\mathbf{P},\mathbf{Q})=(P_1,\dots,P_m,Q_1,\dots,Q_n)^T\in \widetilde{\mathcal{R}}^{m+n}$ be a vector. Explicitly write $P_i=\sum_{k=0}^{\ell-1}T^{\frac{k}{\ell}}P_{i,k}$, where $P_{i,k}\in \mathcal{R}$, and $Q_j=\sum_{k=0}^{\ell-1}T^{\frac{k}{\ell}}Q_{j,k}$, where $Q_{j,k}\in \mathcal{R}$. Since every $\alpha\in \K_{\ell}$ can be written as $\alpha=T^{\deg(\alpha)}\cdot \frac{\alpha}{T^{\deg(\alpha)}}$, where $\deg(\alpha)\in \frac{1}{\ell}\mathbb{Z}$ and $\left|\frac{\alpha}{T^{\deg(\alpha)}}\right|=1$, there exist $r_1,\dots, r_{m+n}\in \mathbb{Z}$ and $\mathbf{o}=\operatorname{diag}\{o_1,\dots,o_{m+n}\}$, with $\vert o_j\vert=1$ for every $j=1,\dots,m+n$, such that
    $$g=\begin{pmatrix}
        T^{\frac{r_1}{\ell}}&&&\\
        &\ddots&&\\
        &&T^{\frac{r_{m+n}}{\ell}}&
    \end{pmatrix}\mathbf{o}.$$
Since $\mathbf{o}$ does not affect the norm, we may assume that $\mathbf{o}=\mathrm{Id}$. Therefore, 
    \begin{equation}
    \label{eqn:auVec}
        gu\begin{pmatrix}
            \mathbf{P}\\
            \mathbf{Q}
        \end{pmatrix}=\begin{pmatrix}
            P_1+\sum_{j=1}^n\alpha_{1,j}Q_{j}\\
            \vdots\\
            P_m+\sum_{j=1}^n\alpha_{m,j}Q_{j}\\
            Q_{1}\\
            \vdots\\
            Q_{n}
        \end{pmatrix}=\sum_{k=0}^{\ell-1}T^{\frac{k}{\ell}}\begin{pmatrix}
            P_{1,k}+\sum_{j=1}^n\alpha_{1,j}Q_{j,k}\\
            \vdots\\
            P_{m,k}+\sum_{j=1}^n\alpha_{m,j}Q_{j,k}\\
            Q_{1,k}\\
            \vdots\\
            Q_{n,k}
        \end{pmatrix}.
    \end{equation}
    Note that the norm of the $k$-th summand on the right-hand side of \eqref{eqn:auVec} is in $q^{\mathbb{Z}+\frac{k}{\ell}}$. Hence, every summand of each entry of \eqref{eqn:auVec} has a different absolute value or is equal to zero. Hence, 
    \begin{equation}
    \label{eqn:gu(P,Q)Norm}
        \left\Vert gu\begin{pmatrix}
            \mathbf{P}\\
            \mathbf{Q}
        \end{pmatrix}\right\Vert=\max_{k=0,\dots,\ell-1}q^{\frac{k}{\ell}}\left\Vert \begin{pmatrix}
            P_{1,k}+\sum_{j=1}^n\alpha_{1,j}Q_{j,k}\\
            \vdots\\
            P_{m,k}+\sum_{j=1}^n\alpha_{m,j}Q_{j,k}\\
            Q_{1,k}\\
            \vdots\\
            Q_{n,k}
        \end{pmatrix}\right\Vert.
    \end{equation}
    For $k=0,1,\dots,\ell-1$, let $\mathbf{P}^{(k)}=(P_{1,k},\dots,P_{m,k})^T$ and $\mathbf{Q}^{(k)}=(Q_{1,k},\dots,Q_{n,k})$. Let $k$ be such that 
    $$\left\Vert gu\begin{pmatrix}
        \mathbf{P}^{(k)}\\
        \mathbf{Q}^{(k)}
    \end{pmatrix}\right\Vert=\min\left\{\left\Vert
        gu\begin{pmatrix}
            \mathbf{P}^{(i)}\\
            \mathbf{Q}^{(i)}
        \end{pmatrix}
    \right\Vert:gu\begin{pmatrix}
        \mathbf{P}^{(i)}\\
        \mathbf{Q}^{(i)}
    \end{pmatrix}\neq 0\right\}.$$ Then, by \eqref{eqn:gu(P,Q)Norm}, 
    \begin{equation}\begin{split}\left\Vert gu\begin{pmatrix}
        \mathbf{P}\\
        \mathbf{Q}
    \end{pmatrix}\right\Vert=\max_{i=0,\dots,\ell-1}q^{\frac{i}{\ell}}\left\Vert gu\begin{pmatrix}
        \mathbf{P}^{(i)}\\
        \mathbf{Q}^{(i)}
    \end{pmatrix}\right\Vert\geq \left\Vert gu\begin{pmatrix}
        \mathbf{P}^{(k)}\\
        \mathbf{Q}^{(k)}
    \end{pmatrix}\right\Vert\end{split}\end{equation}
    Furthermore $\begin{pmatrix}
        \mathbf{P}^{(k)}\\
        \mathbf{Q}^{(k)}
    \end{pmatrix}\in \mathcal{R}^{m+n}$. As a consequence, there exists a vector $\begin{pmatrix}
        \mathbf{P}\\
        \mathbf{Q}
    \end{pmatrix}\in \mathcal{R}^{m+n}$ such that 
    $$\lambda_1\left(gu\widetilde{\mathcal{R}}^{m+n}\right)=\left\Vert gu\begin{pmatrix}
        \mathbf{P}\\
        \mathbf{Q}
    \end{pmatrix}\right\Vert.$$
    Therefore, $\lambda_1\left(gu\widetilde{\mathcal{R}}^{m+n}\right)\geq \lambda_1\left(gu\mathcal{R}^{m+n}\right)$. On the other hand, $gu\mathcal{R}^{m+n}\subseteqq gu\widetilde{\mathcal{R}}^{m+n}$, and thus, $\lambda_1\left(gu\widetilde{\mathcal{R}}^{m+n}\right)=\lambda_1\left(gu\mathcal{R}^{m+n}\right).$
\end{proof}

\begin{remark}
By Proposition~\ref{prop:minima comparison}, the set introduced in equation~\eqref{Mnew} can equivalently be written as
\[
\mathfrak{M}_0(s,t) 
= \left\{ 
\x \in U : 
\lambda_1\!\left(g_{s,t}^{\star} u_1^{\star}(\x) \mathcal{R}^{n+1}\right) 
= 
\lambda_1\!\left(g_{s,t}^{\star} u_1^{\star}(\x) \widetilde{\mathcal{R}}^{n+1}\right) 
< 
q^{-\frac{(d+2)t - 2ns}{2(n+1)}}
\right\}.
\]
This formulation offers an alternative characterization, and, most importantly, note that $g_{s,t}^{\star} u_1^{\star}(\x) \widetilde{\mathcal{R}}^{n+1}$ is a lattice in $\K_{\ell}^{n+1}$. Hence, one can use the Duality theorem over the extended field $\K_{\ell}$ for the aforementioned lattice and have an alternative approach to provide a counting estimate for the generic part of the manifold~$\mathscr{M}$.

\end{remark}
Proposition \ref{prop:minima comparison} gives rise to several questions about the comparison between successive minima of discrete subgroups in extension fields and lattices in extension fields. Hence, we conclude this section with the following questions.
\begin{question}
    Let $\ell \geq 2$ and $n\in \N$. Also let $g\in \operatorname{SL}(n,\K_{\ell})$. 
    \begin{enumerate}
        \item When do we have $\lambda_1(g\mathcal{R}^n)=\lambda_1(g\widetilde{\mathcal{R}}^n)$?
        \item Given any $2\leq i \leq n$, when do we have $\lambda_i(g\mathcal{R}^n)=\lambda_i(g\widetilde{\mathcal{R}}^n)$?
        \item Given $2\leq i\leq n$, when do we have $\lambda_j(g\mathcal{R}^n)=\lambda_j(g\widetilde{\mathcal{R}}^n)$ for every $j=1,\dots,i$? 
    \end{enumerate}
\end{question}
\bibliography{mybib}
\bibliographystyle{amsalpha}
\end{document}